\theoremstyle{plain}
\newtheorem{lemma}{Lemma}[section]
\newtheorem{proposition}[lemma]{Proposition}
\newtheorem{theorem}[lemma]{Theorem}
\theoremstyle{definition}
\newtheorem{remark}[lemma]{Remark}
\begin{document}
\title[Parameterized Gromov-Witten Invariants]{\large  Parameterized Gromov-Witten invariants
and topology of symplectomorphism groups}
\author[H.-V. Le and K. Ono]{H\^ong-V\^an L\^e$^{1}$
and Kaoru Ono$^{2} $} \thanks{ $^1$
Partially supported by grant of ASCR Nr IAA100190701,\\
$^2$
Partially supported by Grant-in-Aid for Scientific Research 
Nos. 08640093 and 18340014, Ministry of Education, Culture, Sports, 
Science and Technology, Japan}

\maketitle

\newcommand{\R}{{\bf R}}
\newcommand{\C}{{\bf C}}
\newcommand{\F}{{\bf F}}
\newcommand{\Z}{{\bf Z}}
\newcommand{\N}{{\bf N}}
\newcommand{\Q}{{\bf Q}}
\newcommand{\Aa}{{\mathcal A}}  
\newcommand{\Bb}{{\mathcal B}}
\newcommand{\Cc}{{\mathcal C}}    
\newcommand{\Dd}{{\mathcal D}}
\newcommand{\Ee}{{\mathcal E}}
\newcommand{\Ff}{{\mathcal F}}
\newcommand{\Gg}{{\mathcal G}}    
\newcommand{\Hh}{{\mathcal H}}
\newcommand{\Kk}{{\mathcal K}}
\newcommand{\Jj}{{\mathcal J}}
\newcommand{\Ll}{{\mathcal L}}    
\newcommand{\Mm}{{\mathcal M}}  
\newcommand{\Nn}{{\mathcal N}}
\newcommand{\Oo}{{\mathcal O}}
\newcommand{\Pp}{{\mathcal P}}
\newcommand{\Qq}{{\mathcal Q}}
\newcommand{\Rr}{{\mathcal R}}
\newcommand{\Ss}{{\mathcal S}}
\newcommand{\Tt}{{\mathcal T}}
\newcommand{\Uu}{{\mathcal U}}
\newcommand{\Vv}{{\mathcal V}}
\newcommand{\Ww}{{\mathcal W}}
\newcommand{\Xx}{{\mathcal X}}
\newcommand{\Yy}{{\mathcal Y}}
\newcommand{\Zz}{{\mathcal Z}}
\newcommand{\zt}{{\tilde z}}
\newcommand{\xt}{{\tilde x}}
\newcommand{\Ht}{\widetilde{H}}
\newcommand{\ut}{{\tilde u}}
\newcommand{\Mt}{{\widetilde M}}
\newcommand{\Llt}{{\widetilde{\mathcal L}}}
\newcommand{\yt}{{\tilde y}}
\newcommand{\vt}{{\tilde v}}
\newcommand{\Ppt}{{\widetilde{\mathcal P}}}
\newcommand{\Remark}{{\it Remark}}
\newcommand{\Proof}{{\it Proof}}
\newcommand{\ad}{{\rm ad}}
\newcommand{\Om}{{\Omega}}
\newcommand{\om}{{\omega}}
\newcommand{\eps}{{\varepsilon}}
\newcommand{\Di}{{\rm Diff}}
\newcommand{\Pro}[1]{\noindent {\bf Proposition #1}}
\newcommand{\Thm}[1]{\noindent {\bf Theorem #1}}
\newcommand{\Lem}[1]{\noindent {\bf Lemma #1 }}
\newcommand{\An}[1]{\noindent {\bf Anmerkung #1}}
\newcommand{\Kor}[1]{\noindent {\bf Korollar #1}}
\newcommand{\Satz}[1]{\noindent {\bf Satz #1}}
\newcommand{\gl}{{\frak gl}}
\newcommand{\so}{{\frak so}}
\newcommand{\su}{{\frak su}}
\newcommand{\ssl}{{\frak sl}}
\newcommand{\ssp}{{\frak sp}}
\newcommand{\g}{{\frak g}}    
\newcommand{\Cinf}{C^{\infty}}
\newcommand{\CS}{{\mathcal{CS}}}
\newcommand{\YM}{{\mathcal{YM}}}
\newcommand{\Jreg}{{\mathcal J}_{\rm reg}}
\newcommand{\Hreg}{{\mathcal H}_{\rm reg}}
\newcommand{\SP}{{\rm SP}}
\newcommand{\im}{{\rm im}}
\newcommand{\inner}[2]{\langle #1, #2\rangle}    
\newcommand{\half}{\scriptstyle\frac{1}{2}}
\newcommand{\p}{{\partial}}
\newcommand{\notsub}{\not\subset}
\newcommand{\iI}{{I}}           
\newcommand{\bI}{{\partial I}}     
\newcommand{\LRA}{\Longrightarrow}
\newcommand{\LLA}{\Longleftarrow}
\newcommand{\lra}{\longrightarrow}
\newcommand{\LLR}{\Longleftrightarrow}
\newcommand{\lla}{\longleftarrow}
\newcommand{\INTO}{\hookrightarrow}
\newcommand{\Sy}{\text{ Diff }_{\om}}
\newcommand{\Ex}{\text{Diff }_{ex}}
\newcommand{\jdef}[1]{{\bf #1}}
\newcommand{\QED}{\hfill$\Box$\medskip}
\newcommand{\dass}{da\ss~}
\newcommand{\UuU}{\Upsilon _{\delta}(H_0) \times \Uu _{\delta} (J_0)}

\begin{abstract}
In this note we introduce parameterized Gromov-Witten invariants for 
symplectic fiber bundles 
and study the topology of the symplectomorphism group.  
We also give sample applications showing the non-triviality of certain
homotopy groups of some symplectomorphism groups.
\end{abstract}

\bigskip

MSC: 53D45, 53D35.

\section{Introduction}

Given a symplectic  manifold $(M,\om)$, one of the basic  mathematical objects associated 
to $(M,\om)$ is 
its automorphism group ${\rm Symp} (M,\om)$.  
Since the group ${\rm Symp} (M,\om)$ can be equipped
with the $C^{\infty}$ topology, we would 
like  to know the homotopy type of this  automorphism group. In this note we are interested
in the following questions:

 1) How large is the
rank of the homotopy group $\pi_i ({\rm Symp} (M,\om) \otimes \Q)$?

 2) What are the characteristic classes
of ${\rm Symp} (M,\om)$, that is, the cohomology ring
of the classifying space ${\rm BSymp} (M,\om)$.

Our approach to these problems uses symplectic fiber bundle setting 
(a similar setting is used in the study of homotopy
type of diffeomorphism groups) and Gromov's technique of pseudoholomorphic curves. We would also like to remark that the Gromov technique of pseudoholomorphic curves 
has been developed and
extended in different directions in the study of the topology of  symplectomorphism groups.  
Recent developments in this direction can be found in, e.g., McDuff's 
survey \cite{McDuff2004}.

This note consists of four sections.  
In  section 2  we recall the definition of  symplectic fiber 
bundles and we introduce the notion of fiber-wise (vertical)
stable maps.  
In section 3 we show that
the  basic properties of the moduli space of stable maps also
hold in the fiber-wise (family) version. As an immediate consequence
we construct parameterized Gromov-Witten invariants for symplectic fiber bundles, 
which is a family version of the usual Gromov-Witten invariants for symplectic 
manifolds.  
In  section 4 we apply this construction 
to  problems 1, 2 mentioned above.  
We associate to each element $\pi _i ({\rm Symp} (M,\om))$ 
a symplectic fiber bundle over $S^{i+1}$
which is the union of two trivial symplectic
bundles  over a disk $D^{i+1}$ glued along the boundary
$\p D^{i+1}$ by this element $\pi_i ({\rm Symp} (M,\om))$. 
We re-interpret a result by 
Gromov \cite{Gromov1985}, Theorem 2.4.C$_2$ on the existence of an element of
infinite oder in the symplectomorphism group of non-monotone $S^ 2\times S^ 2$ 
in terms of parameterized Gromov-Witten invariants, 
see Theorem \ref{Theorem 4.3}. We also slightly generalize 
Gromov's result in the following cases.
We denote by $(X^4_1,\om_1)$ a non-monotone symplectic manifold which is
diffeomorphic to $S^2\times S^2$, and by $(X^4_2,\om_2)$ a symplectic
manifold which is diffeomorphic to $\C P^2\# \overline{\C P^2}$.

\medskip

{\bf Theorem \ref{Theorem 4.4}.} {\it a)  Let $(M_1,\Om_1)= (X^4_1\times N^{2k},\om_1\oplus \om_0)$ be a symplectic manifold 
 with $(X^4_1,\om_1)$ as above and $(N,\om_0)$  a compact symplectic 
manifold.
Then we have $rk (\pi_1 ({\rm Symp} (M_1,\Om_1))\otimes \Q)\ge 1$.

b) We also have $rk (\pi_1({\rm Symp} (X^4_2, \om_2)) \ge 1$.}

\medskip

There are intensive studies on cohomology groups and homotopy types of 
symplectomorphism groups of rationally ruled symplectic 4-manifolds such as 
Abreu \cite{Abreu1998}, Abreu-McDuff \cite{A-M2000}, Anjos 
\cite{An}, etc.  
In fact, we can improve Theorem 4.5 
for the case of $(M_1,\Om_1)$ 
without using ``hard machinery'.

{\bf Theorem  \ref{Theorem 4.7}.}{\it a) The rank of the homomorphism 
$i_*:\pi_1({\rm Symp} (M_1,\Om_1)) \to \pi_1({\rm Diff}(M_1))$ is at least 1.

b) The rank of the homomorphism $i_* :\pi_3({\rm Symp} (M_1,\Om_1)) 
\to \pi_3({\rm Diff}(M_1))$ is greater than or equal 2.}

In   section 4 we also construct characteristic classes of
the group ${\rm Symp} (M,\om)$ by formulating the Gromov-Witten 
invariants in a dual way.
We also include an Appendix, which contains an alternative proof of 
Theorem \ref{Theorem 4.3}, a special version of Theorem \ref{Theorem 4.4}.a.

After the preliminary version of this note was written \cite{LO2001}, 
we learned several works on the topology of symplectomorphism groups, 
\cite{Kedra}, \cite{Nishinou}, see also references in \cite{McDuff2004}. 
Since some results in \cite{LO2001} have been quoted in some literature 
e.g.  \cite{Buse2005}, \cite{Busepreprint}, \cite{McDuff2004}, we feel a need 
to revise the version \cite{LO2001} 
to correct some errors as well as to add details to missing arguments. 

\medskip

{\bf Acknowledgement.} 
We thank Forschungsinstitut Oberwolfach, where we exchanged 
some ideas during our stay in RIP program 1996. 
The first author is indebted to John Lott
for his inspiring lecture \cite{Lott1996} and
his help in the reference in the homotopy type of diffeomorphism groups.
We are also grateful to Dusa McDuff for her interest and
critical  helpful comments.  A part of this note was   written 
during the stay of the first author at the Max-Planck-Institute in Bonn
and the Mathematical Institute of the University Leipzig during her Heisenberg 
fellowship. She thanks all these institutions for their hospitality.
The second author thanks Professor Akira Kono, who suggested 
the way of simplifying the proof of Theorem \ref{Theorem 4.7}.

\section{Symplectic fiber bundles and vertical stable maps}

In this section we recall the notions of symplectic fiber bundles, 
stable maps and introduce the notion of vertical stable maps. 
We refer to \cite{G-L-S1996} 
for more discussions on symplectic fiber bundles. 
The idea of counting fiber-wise holomorphic curves in 
symplectic fiber bundles 
is also suggested by Kontsevich (in his communication to us after a preliminary version of this note has been written in 1997) and by Lu-Tian\footnote{We thank Dusa McDuff for 
informing us that they used this idea in order to construct equivariant Gromov-Witten invariants, which is 
a special case of the parameterized Gromov-Witten invariants for 
the symplectic fiber bundle associated with the Hamiltonian action 
of a compact Lie group on a symplectic manifold.}.  

\subsection{Symplectic bundles and their fiber-wise compatible
almost complex structures}

A fibration $M\to E \stackrel{\pi}{\to }B$ is said to be a symplectic fiber bundle,
if the fiber $M$ is diffeomorphic to a symplectic  manifold $(M,\om)$
and the transition function takes its value in
the group ${\rm Symp} (M,\om)$. 
We denote by $\Jj_{\pi}(E)$ the associated bundle over $B$ 
whose fiber is the space $\Jj(M)$ 
of smooth compatible  almost complex structures 
on $(M,\om)$.  
Since the fiber $\Jj(M)$ is contractible, the space of sections 
$J(E):B \to \Jj_{\pi}(E)$ is also non-empty and contractible. 

In what follows, we are interested in defining invariants which
detect the non-triviality of symplectic fiber bundles.  
Associated to any symplectic fiber bundle 
$M \to E\stackrel{\pi}{\to} B$ we obtain the local system 
of fiberwise homology groups, resp. fiberwise cohomology groups, denoted by 
$\Hh_*(E)$, resp. $\Hh^*(E)$.  In what follows,
the coefficients of cohomology groups are in $\R$ or $\Z$,
and the coefficients of homology groups are in $\Z$.   
Clearly all the invariants of the associate local systems $\Hh_*(E), \Hh^*(E)$ are 
also invariants of symplectic fiber bundles.  
In particular, if a symplectic fiber bundle is trivial, 
then the associated local systems are simple, i.e., trivial. 
 We also observe that  for
a symplectic fiber bundle $E$ there is 
always a section $s^{[\om]}: B \to \Hh^2(E)$ which
takes a given value $[\om]\in H^2(M,\R)$ and
 there is also a section $s^{c_1}: B \to \Hh^2(E)$
which takes a given value $c_1(M,\om)\in H^2(M,\Z)$.

\subsection{Stable maps and vertical stable maps.}  

Our notion of vertical stable maps is  based on  the notion of stable maps due to Kontsevich \cite{K-M1994}, \cite{Kontsevich1995}, see also \cite{F-O1999} whose exposition we follow closely.

Let $g$ and $m$ be nonnegative integers. A semistable curve with $m$ marked  points is 
a pair $(\Sigma, z)$ of a connected space  $\Sigma = \cup \pi_{\nu} (C_\nu)$, 
where $C_\nu$ is a Riemann surface and $\pi _{\nu}:C_\nu \to \Sigma$ is 
a continuous map, and $ z = ( z_1, \cdots , z_m)$ are $m$ distinct points 
in $\Sigma$ with the following properties.\\
(1) \  $\pi_{\nu}$ is the normalization of the irreducible component 
$\Sigma_{\nu}=\pi_{\nu}(C_{\nu})$ of $\Sigma$ for all $\nu$.\\
(2) \ For each $p\in \Sigma $ we have $\sum_{\nu} \# \pi _{\nu}^{-1} (p) \le 2$.  
Here $\#$ denotes the order of the set.\\
(3) \ $\sum _\nu \# \pi_{\nu} ^{-1} (z_i) = 1$ for each $z_i$.\\
(4) \ The number of  Riemann surfaces $C_\nu$ is finite.\\
(5) \ The set $\{ p \in \Sigma |\, \sum_{\nu} \# \pi _{\nu} ^{-1} (p) = 2\}$ is finite.

\medskip

We denote by $g_\nu$ the genus of $C_\nu$ and 
by $m_\nu$ the number of points $\overline{p}$ on $C_\nu$, 
which are the inverse image of nodes of $\Sigma$, 
i.e. $\sum_{\gamma} \# \pi_{\gamma}^{-1} (\pi_{\nu}(\overline{p})) = 2$, 
or marked points, 
i.e. $\pi_{\nu}(\overline{p}) = z_j$ for some $j$.  
The genus $g$ of a semistable curve $\Sigma$ is defined by 
$$ g = \sum _\nu g_\nu + \dim H_1 (T_\Sigma, \Q),$$
where $T_\Sigma$ is a graph associated to $\Sigma$ in the following way. 
The vertices of $T_\Sigma$ correspond to the components of $\Sigma$.  
Denote by $v_{\nu}$ the vertex corresponding to $\Sigma_{\nu}$.  
For a node $p \in \Sigma_{\nu} \cap \Sigma_{\nu'}$, 
we assign an edge $e_p$ joining vertices $v_{\nu}$ and $v_{\nu'}$.  
(When $p$ is a node of $\Sigma_{\nu}$, 
the ``edge'' $e_p$ becomes a loop based at $v_{\nu}$.)  

A homeomorphism $\theta : \Sigma \to \Sigma '$ between  two semistable curves is called 
an isomorphism, if it restricts to a biholomorphic 
isomorphism $\theta _{\nu \nu'} : \Sigma _\nu \to \Sigma'_{\nu'}$ for each component 
$\Sigma _\nu$ of $\Sigma$ and some component $\Sigma'_{\nu'}$.  
We also require that $\theta$ maps the marked points in $\Sigma$ onto the corresponding 
marked points in $\Sigma '$ bijectively.

 Let $J(E)$ be a vertical compatible almost
complex structure. A map $u: (\Sigma, z)\to
E$ is called a vertical $J(E)$-stable map, if
the composition map $\pi \circ u$ sends $(\Sigma, z)$ to
a point $b \in B$ and $ u$ is a stable map from 
$(\Sigma, z)$ to $\pi^{-1} (b)= (M,J(E)\vert_{E_b})$.   
In other words, for each $\nu$, the restriction of $u$ to each component 
$\Sigma_\nu$ is either a non-constant map, or we have $m_\nu + 2g_\nu \ge 3$.

To define the moduli space of vertical 
stable maps, we assume first, for the sake of simplicity and a later application, 
that the local system $\Hh_2(E)$ is simple, i.e., the fundamental group $\pi_1(B)$ acts trivially on $\Hh_2(E)$ (e.g. it is the case if the base $B$ is simply connected).

In this case, for a class $A \in H_2(M,\Z)$, there
is  a global locally constant section $s_A$ : $B \to \Hh_2(E)$ whose value is $A$ 
at a reference fiber. 
We consider the moduli space of all vertical stable map $((\Sigma_g, z), u)$ such that $(\Sigma_g, z)$ is of genus $g$ with $m$ marked points.  We denote 
by $C\Mm_{g,m}(E, J(E), s_A)$ the moduli space of vertical
stable maps representing the class $s_A (\pi(u))$:
$$C\Mm_{g,m}(E, J(E),s_A): = \cup_{b\in B} 
\{b\} \times C\Mm_{g,m}(E_b,J(E)\vert_{E_b},s_A(b)),$$ 
which carries a Kuranishi structure in the sense of \cite{F-O1999}, see Lemma 
\ref{Lemma 3.1} below.  

Here  $C\Mm_{g,m} (E_b, J(E)\vert_{E_b}, s_A(b))$  is the moduli  space of stable maps 
of genus $g$, with $m$ marked points and representing the homology class $s_A(b)$.  
Here, two pairs $((\Sigma, z), h))$ and $((\Sigma ', z'), h')$ are equivalent, 
if and only if there exists an isomorphism $\theta: (\Sigma,z) \to (\Sigma', z')$ satisfying $h'\circ \theta = h$.

If  the action of $\pi_1(B)$ on the fiber $H_2(M,\Z)$ is non-trivial, 
we can still define 
a notion of a moduli space of vertical stable maps by considering 
a multi-valued section $s_A: B \to \Hh_2(E)$ 
which is obtained by the locally constant continuation of 
$A$ in a typical fiber to a multi-valued section.  
We note that the pairing $\langle s^{[\om]} (b), s_A (b) \rangle$ as 
well as the pairing 
$\langle s^{c_1} (b), s_A(b) \rangle$ are constant functions on $B$, since they are locally constant 
functions and we assume that $B$ is connected.  
The number $\langle s^{[\om]}, s_A (b) \rangle$ is the ``energy" 
of a holomorphic curve realizing any class in $s_A(b)$, and the second 
number  $\langle s^{c_1}(b), s_A(b) \rangle$ enters 
in the expected dimension of the moduli space of stable maps 
representing any class in $s_A (b)$.  
Now using the Gromov compactness 
theorem it is easy to see that  there is only a finite number of 
values of $s_A$ in each fiber $H_2(M=\pi^{-1}(b), \Z)$ such 
that there is a $J(E)\vert_{E_b}$-holomorphic curve representing 
a homology class in the set $s_A (b)$.  
The projection from $C\Mm_{g,m}(E, J(E), s_A)$ 
to $B$ is proper.  
It follows from the Gromov compactness theorem:  
If $u_i$ is a sequence of $J_i$-stable maps with energy 
bounded by a constant and $J_i$ converges to $J_{\infty}$ in the 
space of almost complex structures, then there exists a subsequence 
$u_{i_k}$, which converges to a $J_{\infty}$-stable map $u_{\infty}$.  

Finally we observe that any element in $s_A (b)$ induces the same class in $H_*(E, \Q)$ by the inclusion.  

\begin{remark}   
It is sometimes more convenient to work with a tame almost complex 
structure (i.e. $\om (X, JX) > 0$ for any non-zero tangent vector $X$).  
As in the non-parameterized case all the compactness 
and perturbation theorems for pseudo-holomorphic curves 
with respect to a compatible almost complex structure hold for a tame 
almost complex structure.  
\end{remark}

\subsection{Examples of symplectic fiber bundles.} 

There are several ways
to construct symplectic fiber bundles.

The first way is the associate bundle method.  
Suppose that  a group $\rm G$ acts
symplectically on  a symplectic manifold $(M,\om)$, i.e. there is a homomorphism
$\rho : {\rm G} \to {\rm Symp} (M,\om)$. Then  we can associate
to each  principal $\rm G$-bundle $P$ over $B$ a symplectic fiber bundle 
$P\times _{\rm G} (M,\om)$.  
This symplectic fiber bundle is non-trivial, if and only if the image $\rho_*(\lambda)$ 
of the homotopy class $\lambda \in [B,{\rm BG}]$ defining the $\rm G$-bundle 
is non-trivial in $[B,{\rm BSymp}(M,\om)]$.

The second way is the pull-back method. 
Suppose that we are given
a symplectic fiber bundle $E$ over a base $B$. Then any map $f$ from
$B'$ to $B$ pulls the bundle $E$ back to a symplectic fiber bundle $E'$ over $B'$.

The third way is the reduction method.  
We begin with a differentiable
fiber bundle $M \to E\to B$ with $M$ being a symplectic manifold and
ask if this fiber bundle also admits a structure of a symplectic fiber bundle.
Of course, it is the case if the inclusion of ${\rm Symp} (M,\om)$ to ${\rm Diff}^+(M)$ is 
a homotopy equivalence
%then it does 
(e.g. if dim $M$ = 2). 
In general, we can state the following criterion, see e.g., 
\cite{G-L-S1996} for more information.  

\
\begin{lemma}\label{Lemma 2.2}
Let $\pi: E\to B$ be a fiber bundle and $\om \in \Om^2(E)$ be a closed 
form such that $\om$ is non-degenerate along all fibers of $E$. 
Then $\pi: E \to B$ admits a structure of a symplectic fiber bundle,  
which is compatible with $\om$. 
\end{lemma}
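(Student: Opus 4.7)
The plan is to reduce the structure group of $\pi: E\to B$ from ${\rm Diff}(M)$ to ${\rm Symp}(M,\om)$ by using $\om$ itself to build a canonical Ehresmann connection whose parallel transport is fiberwise symplectic, and then assembling local trivializations out of that parallel transport.

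First, I would exploit the fiberwise non-degeneracy of $\om$ to define a horizontal distribution $H\subset TE$ as the $\om$-orthogonal complement of the vertical bundle $V=\ker d\pi$, giving a splitting $TE = V \oplus H$. Given a vector field $X$ on $B$, let $\tilde X$ denote its horizontal lift; its local flow $\phi_t$ preserves fibers and therefore restricts to diffeomorphisms $\psi_t: F_{\gamma(0)}\to F_{\gamma(t)}$ along the integral curves $\gamma$ of $X$.

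Next, I would verify that each $\psi_t$ is a symplectomorphism from $(F_{\gamma(0)}, \om|_{F_{\gamma(0)}})$ to $(F_{\gamma(t)}, \om|_{F_{\gamma(t)}})$. By Cartan's formula and $d\om = 0$ one has $\mathcal L_{\tilde X}\om = d(\iota_{\tilde X}\om)$. By construction of $H$, the 1-form $\iota_{\tilde X}\om$ annihilates every vertical vector, so its restriction to any fiber vanishes; consequently $d(\iota_{\tilde X}\om)$ also restricts to zero on fibers. A short computation then gives $\tfrac{d}{dt}\,\psi_t^*(\om|_{F_{\gamma(t)}}) = \iota_{\gamma(0)}^*\phi_t^*(\mathcal L_{\tilde X}\om) = 0$, so $\psi_t^*(\om|_{F_{\gamma(t)}}) = \om|_{F_{\gamma(0)}}$, as desired.

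Finally, I would build local trivializations by radial parallel transport: around each $b_0 \in B$ choose a coordinate ball $U\subset B$ and define $\Phi_U: U \times F_{b_0} \to \pi^{-1}(U)$ by parallel transporting along the straight line from $b_0$ to $b \in U$; smooth dependence of ODEs on parameters and initial conditions makes $\Phi_U$ a diffeomorphism (after shrinking $U$ if necessary), and each slice $\{b\}\times F_{b_0}\to F_b$ is a symplectomorphism by the previous step. The transition function between two such trivializations acts on each fiber as a composition of parallel transports, hence lies in ${\rm Symp}(F_{b_0}, \om|_{F_{b_0}}) \cong {\rm Symp}(M,\om)$, producing the claimed symplectic fiber bundle structure compatible with $\om$. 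The only non-routine step is the middle one, verifying that the $\om$-canonical connection has symplectic parallel transport; everything else is a standard application of Ehresmann's fibration theorem and the theory of ODEs.
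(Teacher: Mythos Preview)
The paper does not give its own proof of Lemma~\ref{Lemma 2.2}; it simply states the result and refers the reader to \cite{G-L-S1996}. Your argument is exactly the standard proof one finds in that reference (and in \cite{McD-S1995}): take the $\om$-orthogonal complement of the vertical bundle as a connection, use $d\om=0$ and Cartan's formula to see that parallel transport is fiberwise symplectic, and build local trivializations by radial parallel transport. The proof is correct; the only implicit assumption is that the horizontal flows are complete, which holds here since the fiber $M$ is compact throughout the paper.
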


The fourth way to construct symplectic fiber bundles is the gluing method.
Suppose that we are given two symplectic  bundles $E_1$ and $E_2$ over 
bases $B_1$ and $B_2$ respectively. Suppose that the restriction of
$E_1$ over the boundary $\p B_1$ is isomorphic to the restriction of
$E_2$ over the boundary $\p B_2$. Then we can glue the bundle $E_1$ with $E_2$
 along the boundary $E_i \vert_{\p B_i}$. In particular when the restriction
of $E_i$ over $\p E_i$ is trivial then the glued bundle is defined uniquely by
a map $\p B_1 \to {\rm Symp}  (M,\om)$. 
If $B_i$ is  closed, we can define the operation of fiber connected sum 
as follows.   
Choose a small disk $D_i$ in $B_i$ and take a trivialization of $E_i\vert_{D_i} \to D_i$.  
Then glue $E_i\vert_{B_i \setminus D_i}$, $i=1,2$, along $\p D_i$.

\begin{remark}  
Each element $g\in \pi_k ({\rm Symp} (M,\om))$ defines
a symplectic fiber bundle $E$ with the fiber $(M,\om)$ over a sphere
$S^{k+1}$ by gluing two trivial symplectic fiber bundles $D^{k+1} \times M$
along the boundary $M\times S^k$ by the element $g$. 
Conversely any symplectic fiber bundle over $S^{k+1}$ is defined by such a method.
\end{remark}

\section{Parameterized Gromov-Witten invariants}

In this section we define parameterized Gromov-Witten invariants 
for symplectic fiber bundles over a closed oriented manifold $B$.  
The base $B$ is assumed to be oriented in oder to deal with 
the orientation of the moduli space of stable maps. 
The base $B$ is also assumed to be a closed manifold in order to get 
the fundamental class of the moduli space of vertical stable maps.

\subsection{Geometric picture}

Recall that a semistable curve $(\Sigma, z)$ with $m$ marked points is called stable, 
if for all its component $C_\nu$ of the normalization of $\Sigma$ we have
$m_\nu + 2g_\nu \ge 3$.
Let $C\Mm_{g,m}$ denote the Deligne-Mumford moduli
space of stable curves, i.e. $C\Mm_{g,m}$ is the set of all isomorphism classes of stable curves with $m$ marked points and of genus $g$. Let us denote by $E^{(m)}$ 
the ``Whitney sum'' (the multiple fiber product over $B$) of $m$ copies of $E$.  
When $2g+m \geq 3$, 
as in the usual case (see e.g. \cite{K-M1994}, 2.4, \cite{Kontsevich1995}, 1.5), 
there is the evaluation map
$$\Pi= pr \times ev_{g,m,s_A} : C\Mm_{g,m}(E, J(E), s_A) \to C\Mm_{g,m} \times E^{(m)},$$
$$ ((\Sigma,z), u)\mapsto ((\tilde \Sigma, \tilde z), u(z_1), \cdots , u(z_m)).$$
Here $(\tilde \Sigma , \tilde z)$ is the stable curve with marked points obtained from 
$(\Sigma, z)$ by  consecutive contractions
of  non-stable components.   
When $2g+m = 0, 1$, we call $ev_{g,m,s_A}$ the evaluation map.  

We briefly recall the notion of Kuranishi structures, 
see \cite{F-O1999}, \S 5 for details.  
Roughly speaking a compact Hausdorff space $X$ 
has a Kuranishi structure, if it is locally described as the zero set 
$s ^{-1} (0)$  of a $V$-bundle over a $V$-manifold, 
namely for each $p \in X$, there exist a $V$-manifold $U_p$, 
a $V$-bundle $E_p$ on it and a continuous section $s$ of $E_p \to U_p$ 
such that the difference $\dim E_p -\dim U_p$ is independent of 
$p \in X$.  
Moreover, we assume that 
such local descriptions are compatible under 
coordinate changes $\{\phi_{pq}\}$ in a suitable sense.  
We also have the notion that $X$ with Kuranishi structure 
has its tangent bundle.  
If a continuous map $f:X \to Y$ extends locally to 
$f_p : U_p \to Y$ for each $p$ 
such that $f_p \circ \phi_{pq} = f_q$, 
we call $f$ is a strongly continuous map.  
If $X$ has an oriented Kuranishi structure and $f$ is 
a strongly smooth map from $X$ to a topological space $Y$, 
then we can define the image  $f_* ([X])$ of the fundamental class of $X$ 
as the image $f_*[(s')^{-1} (0)]$ of the fundamental class of the zero set $(s')^{-1} (0)$ of a perturbed smooth multi-section $s'$ of $E$  
which is transversal to zero. 
Taking into account of multiplicity 
in an appropriate way, this
fundamental class gives a well-defined element in  $H_*(Y, \Q)$.

\begin{lemma}\label{Lemma 3.1}
The space $C\Mm_{g,m} (E, J(E), s_A)$ has a Kuranishi structure with oriented 
tangent bundle.  
This space is  compact and of dimension $\dim B + 2m + 
2 \langle c_1 (M), s_A \rangle + (6-\dim M) (g-1)$ .  
Moreover, $\Pi$ is strongly continuous map in the sense of Kuranishi structure.  
\end{lemma}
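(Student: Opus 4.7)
The plan is to adapt the Fukaya--Ono construction of a Kuranishi structure on $\Mm_{g,m}(M,J,A)$ in \cite{F-O1999} to the parameterized setting by treating the base coordinate $b \in B$ as an additional smooth variable. First, I would cover $B$ by contractible open sets $\{U_\alpha\}$ over which the symplectic bundle $\pi : E \to B$ trivializes as $E|_{U_\alpha} \cong U_\alpha \times M$. Under such a trivialization the vertical compatible almost complex structure $J(E)$ becomes a smooth family $\{J_b\}_{b \in U_\alpha}$ in $\Jj(M)$, and the part of the moduli space lying over $U_\alpha$ is identified with $\bigsqcup_{b \in U_\alpha} \{b\} \times \Mm_{g,m}(M, J_b, A)$.

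Next, I would build local Kuranishi charts at each point $p_0 = (b_0, [(\Sigma, z), u]) \in C\Mm_{g,m}(E, J(E), s_A)$. Following \cite{F-O1999}, pick a finite-dimensional subspace $E_{p_0} \subset C^{\infty}(\Sigma; u^*TM \otimes \Lambda^{0,1})$ that is invariant under the automorphism group of $((\Sigma, z), u)$ and that surjects onto the cokernel of the linearization $D_u \bar\partial_{J_{b_0}}$. Using a connection on $E \to B$, parallel transport $E_{p_0}$ to a smooth family of obstruction spaces $E_{p_0}(b, \Sigma', z', u')$ as $b$ varies in $U_\alpha$ and $(\Sigma', z', u')$ varies near $(\Sigma, z, u)$. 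The local uniformizer $\Uu_{p_0}$ consists of approximately $J_b$-holomorphic tuples $(b, \Sigma', z', u')$ with $\bar\partial_{J_b} u' \in E_{p_0}(b, \Sigma', z', u')$, taken modulo automorphisms; the parallel transported family defines a $V$-bundle $\Ee_{p_0}$ on $\Uu_{p_0}$, and the Kuranishi section is $s_{p_0}(b, \Sigma', z', u') = \bar\partial_{J_b} u'$. The zero locus of $s_{p_0}$ then recovers a neighborhood of $p_0$ in $C\Mm_{g,m}(E, J(E), s_A)$.

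The asserted dimension follows by adding $\dim B$, coming from the base parameter, to the standard fiberwise virtual dimension $2m + 2\langle c_1(M,\om), A\rangle + (6 - \dim M)(g-1)$. Orientation of the Kuranishi tangent bundle is obtained by tensoring the canonical orientation along the fiber --- coming from the complex-linear part of $D \bar\partial_{J_b}$, which is already part of the standard construction --- with the chosen orientation of $B$. Compactness follows from the properness of the projection to $B$ already noted in Section 2 (the energy $\langle s^{[\om]}, s_A\rangle$ is constant on $B$, so fiberwise Gromov compactness yields sequential compactness over compact subsets of $B$), combined with compactness of $B$ itself. Finally, $\Pi = pr \times ev_{g,m,s_A}$ is strongly continuous because on each chart the stabilization map $(\Sigma', z') \mapsto (\tilde\Sigma', \tilde z') \in C\Mm_{g,m}$ and the evaluations $u'(z_i) \in E_b \subset E$ depend smoothly on the chart coordinates and are compatible with coordinate changes.

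The main technical obstacle is the verification of the coordinate-change axioms, i.e., that two charts $(\Uu_{p_0}, \Ee_{p_0}, s_{p_0})$ and $(\Uu_{p_1}, \Ee_{p_1}, s_{p_1})$ at nearby points are related by smooth embeddings of $V$-manifolds and bundle maps intertwining the Kuranishi sections. Because the base variable enters smoothly and decouples from the gluing parameters of the curve, the analytic estimates of \cite{F-O1999} carry over with uniform control over compact subsets of $B$; still, the careful matching of parallel-transported obstruction spaces and of pregluing data across charts, together with consistency of orientations, is where most of the bookkeeping lies.
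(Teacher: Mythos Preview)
Your proposal is correct and follows essentially the same route as the paper's own proof: both reduce to the Fukaya--Ono construction in \cite{F-O1999} by choosing a finite-dimensional obstruction space $E_0$ at a fixed $(b_0,u)$, using a local trivialization of $E$ over a neighborhood of $b_0$ to regard the nearby $J_b$ as a family of almost complex structures on a single fiber, and noting that $E_0$ remains transverse to ${\rm Im}\,D_u\bar\partial_{J_b}$ for $b$ close to $b_0$, so that the Kuranishi chart is fibered over an open set in $B$. Your write-up is in fact more detailed than the paper's (which simply asserts that the arguments of \cite{F-O1999} carry over); the only cosmetic difference is that you phrase the identification of nearby fibers via parallel transport along a connection, whereas the paper uses the local trivialization directly.
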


By this Lemma, we can define the virtual fundamental 
cycle of the moduli space of vertical stable maps.  
Denote by $\Pi_*([C\Mm_{g,m}(E,J(E),s_A)])$ the induced 
class in $H_*(C\Mm _{g,m} \times E^{(m)}, \Q)$. 
The map $\Pi$ induces a map in cohomologies

\begin{equation} 
I^E_{g,m,s_A}: H^*(E^{(m)},\Q)  \to H^{*+\mu}(C\Mm_{g,m},\Q) 
\label{3.1.2}
\end{equation} 
by
\begin{equation}
I^E_{g,m, s_A}(\gamma)=PD(\gamma \backslash \Pi_*(C\Mm_{g,m}(E,J(E),s_A))) 
\label{(3.1.3)}
\end{equation}
If $2g+m = 0, 1$, we define 
\begin{equation}
I^E_{g,m,s_A}(\gamma) = \langle \gamma,
(ev_{g,m,s_A})_*(C\Mm_{g,m}(E,J(E),s_A)) \rangle \in \Q. \label{(3.1.3')} 
\end{equation}
The parameterized Gromov-Witten
invariants, as in the usual case, are the collection
of maps $I^E_{g,m,s_A}$ defined in (\ref{(3.1.3)}), (\ref{(3.1.3')}). 
The shift of grading 
$\mu$ equals $-\langle 2c_1(M), A \rangle + (g-1)\dim M -\dim B$.

\begin{remark} 
For symplectic fiber bundles $E$  with 
the local system $\Hh_2(E)$ being simple, we can also interpret 
the parameterized Gromov-Witten invariants $I^E_{g,0, s_A}$ 
with $\mu=0$ (relative degree $0$) in term of 
``counting vertical holomorphic curves" 
of genus $g$ representing any class in $s_A$.  
Here ``counting'' means the ``order'' of the space with Kuranishi 
structure of expected dimension $0$.  
\end{remark}

\subsection{Proof of Lemma \ref{Lemma 3.1}}  
To define rigorously the parametrized Gromov-Witten invariants 
for all compact symplectic fiber bundles, we need to prove Lemma 
\ref{Lemma 3.1} and moreover, to show that
the map $I^E_{g,m,s_A}$ does not depend on the choice of $J(E)$. 
Since the base space $B$ is assumed to be compact, the moduli space 
$C\Mm_{g,m}(E, J(E), s_A)$ is compact, cf. section 2.2.  

\begin{proof}[Proof of Lemma \ref{Lemma 3.1}] 
Our proof of Lemma \ref{Lemma 3.1} is  an adaptation of the proof of 
the corresponding results concerning Gromov-Witten invariants 
\cite{F-O1999}, Theorems 7.10 and 7.11.  

Let $u$ be a vertical stable map over $b_0 \in B$.  
For simplicity, we assume that the domain of $u$ is irreducible.  
(The general case is handled in a similar way.)
Pick a finite dimensional space 
$E_0 \subset L^p\Om^1(u^*TE_{b_0})$ such that 
$$
{\rm Im}~D_u\overline{\partial}_{J_{b_0}} + E_0 = L^p\Om^1(u^*TE_{b_0}).
$$
If $b \in B$ is in a neighborhood $D_0$ of $b_0$, we can identify fibers 
$E_b$ and $E_{b_0}$ by a local trivialization of $E$.  
Thus $J_b$ is considered as an almost complex structure on $E_{b_0}$.  
Then there is a smaller neighborhood $D'_0 \subset D_0$ of $b_0$ 
such that 
$$
{\rm Im}~D_u\overline{\partial}_{J_{b}} + E_0 = L^p\Om^1(u^*TE_{b_0}).
$$
Using this observation, the argument in \cite{F-O1999} implies 
that there exists 
a Kuranishi neighborhood $(U, \Ee, s)$ of $u$ on 
$C\Mm_{g,m}(E, J(E), s_A)$ 
so that a $V$-manifold $U$ is fibered over an open subset of $B$.   

All arguments in \cite{F-O1999} can be directly adapted to 
the parametrized case or they even imply the corresponding statements in 
the parametrized case.  
For details of the construction of Kuranishi structure, see \cite{F-O1999}, \S 12.   
\end{proof}

In order to show that the usual Gromov-Witten invariants 
do not depend on the choice of compatible almost complex structures, 
perturbations, 
we needed to construct a bordism between the moduli spaces 
corresponding to two almost complex structures $J_1$ and $J_2$ 
(with perturbations).   
This bordims is a version of the moduli space 
of vertical stable pseudo-holomorphic 
curves parametrized by the interval $[0,1]$.

\subsection{Properties of parameterized Gromov-Witten invariants}

The following theorem immediately follows from Lemma \ref{Lemma 3.1} 
applied to the case that the base space is $B \times [0,1]$.  

\begin{theorem}\label{Theorem 3.3}
The parameterized Gromov-Witten invariants are
 invariants of symplectic fiber bundles. 
\end{theorem}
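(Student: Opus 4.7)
The plan is to reduce both invariance under isomorphism of symplectic fiber bundles and independence of the auxiliary vertical almost complex structure $J(E)$ to a single cobordism argument built on Lemma \ref{Lemma 3.1}, applied to the enlarged base $B\times [0,1]$.

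First, suppose $\Phi : E_0 \to E_1$ is an isomorphism of symplectic fiber bundles over $B$. I would form the symplectic fiber bundle $\widetilde E \to B\times [0,1]$ obtained by gluing the pullbacks of $E_0$ over $B\times [0,1/2]$ and of $E_1$ over $B\times [1/2,1]$ along $B\times \{1/2\}$ via $\Phi$, and then choose a vertical compatible almost complex structure $\widetilde J$ on $\widetilde E$ that restricts to the given $J(E_i)$ on $B\times \{i\}$ for $i=0,1$. This is possible because $\Jj_{\pi}(\widetilde E)$ has contractible fibers, so sections with prescribed boundary values exist. Applying Lemma \ref{Lemma 3.1} to the base $B\times [0,1]$, the moduli space $C\Mm_{g,m}(\widetilde E, \widetilde J, s_A)$ carries an oriented Kuranishi structure of dimension one larger than at either endpoint, whose boundary is canonically identified with the disjoint union $C\Mm_{g,m}(E_0, J(E_0), s_A) \sqcup \bigl(-C\Mm_{g,m}(E_1, J(E_1), s_A)\bigr)$.

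Next, following the standard Kuranishi perturbation scheme of \cite{F-O1999}, I would choose a transverse multi-section of the obstruction bundle over this Kuranishi space whose restriction to the two boundary strata agrees with the transverse perturbations used to define $I^{E_i}_{g,m,s_A}$. Composing the evaluation map $\Pi$ for $\widetilde E$ with the obvious retraction $\widetilde E^{(m)}\to E_0^{(m)}$ induced by a trivialization along $[0,1]$, the zero set of this multi-section yields an oriented chain in $C\Mm_{g,m}\times E_0^{(m)}$ whose boundary equals the difference of the virtual fundamental cycles for $E_0$ and $E_1$. Feeding this chain into (\ref{(3.1.3)}) and (\ref{(3.1.3')}) gives $I^{E_0}_{g,m,s_A} = I^{E_1}_{g,m,s_A}$. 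The same construction applied with $E_0 = E_1 = E$ and a path of vertical almost complex structures $\{J_t\}_{t\in [0,1]}$ joining $J_0$ and $J_1$ establishes independence from the choice of $J(E)$, so the collection $\{I^E_{g,m,s_A}\}$ depends only on the isomorphism class of the bundle.

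The step I expect to be the main obstacle is the bookkeeping of Kuranishi data along the boundary: one must verify that the charts and coordinate changes produced by Lemma \ref{Lemma 3.1} for $\widetilde E$ restrict, at $t=0$ and $t=1$, to precisely the data used in the definition over $B$, and that multi-sections prescribed on the two boundary strata can be extended compatibly and transversely into the interior. This is the parametrized analogue of the bordism version of the construction in \cite{F-O1999} alluded to at the end of subsection 3.2; once this compatibility is in place, the cobordism conclusion, and hence Theorem \ref{Theorem 3.3}, is formal.
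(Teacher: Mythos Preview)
Your proposal is correct and follows exactly the paper's approach: the paper's entire proof is the single remark that Theorem \ref{Theorem 3.3} follows immediately from Lemma \ref{Lemma 3.1} applied with base $B\times [0,1]$, i.e., the cobordism argument you describe. You have simply spelled out in more detail (the gluing of $E_0$ and $E_1$ via $\Phi$, the extension of $\widetilde J$, the boundary compatibility of Kuranishi data and multi-sections) what the paper leaves implicit by pointing to the bordism construction at the end of \S3.2 and to \cite{F-O1999}.
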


\begin{remark}
The bordism type invariants
of the moduli space of pseudoholomorphic curves
may have more informations than the (cohomological) Gromov-Witten 
invariants. 
Such  examples of finer Gromov-invariants of 
symplectic manifolds  can be found in \cite{McDuff1987}.
\end{remark}

In order to distinguish a symplectic fiber bundle from 
the trivial one by parameterized Gromov-Witten invariants, 
we need to compute those for trivial symplectic fiber bundles 
$E = B \times M$. 
By the K\"unneth formula, the algebra $H^*(B \times (M)^{(m)}, \Q)$ 
is isomorphic to $H^*(B,\Q) \otimes (H^*(M, \Q))^{\otimes m}$. 
Let us denote by $\alpha_i$ elements in $  H^*(M,\Q)$ and by $\beta$ an element 
in $H^*(B,\Q)$.

\begin{proposition}
The Gromov-Witten invariants of a trivial symplectic fiber bundle
equals
\begin{equation}
I^E _{g,m, s_A} (\beta \otimes \alpha_ 1 \otimes \cdots \otimes \alpha_m) 
= 
I^M_{g,m, A} (\alpha_1 \otimes \cdots \otimes \alpha_m)\int_B \beta . 
\label{(3.3.4)}
\end{equation}
\end{proposition}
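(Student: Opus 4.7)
The plan is to exploit the product structure of a trivial bundle by choosing a product almost complex structure and a product multi-section perturbation, identifying the parameterized moduli space with $B \times C\Mm_{g,m}(M,J_0,A)$, and then applying the K\"unneth formula.

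First I would choose $J(E)$ to be the constant section $b \mapsto J_0$ for some fixed compatible $J_0$ on $M$. A vertical $J(E)$-stable map over $b \in B$ is then precisely a pair $(b,u)$ where $u$ is an ordinary $J_0$-stable map into $M$ representing $A$, giving a canonical set-theoretic identification
\[
C\Mm_{g,m}(E,J(E),s_A) \;=\; B \times C\Mm_{g,m}(M,J_0,A).
\]
In the proof of Lemma \ref{Lemma 3.1} the Kuranishi neighborhoods are constructed so that the $V$-manifold $U$ fibers over an open set of $B$; in the trivial case one may take the finite-dimensional obstruction space $E_0 \subset L^p\Om^1(u^*TM)$ to be independent of $b$, so that the chart is literally a product of an open disk in $B$ with a Kuranishi chart for $C\Mm_{g,m}(M,J_0,A)$. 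Choosing the abstract multi-section perturbation pulled back from the $M$-factor produces a zero set that is likewise a product, compatibly with the orientation on $B$.

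Next, under the trivialization $E^{(m)} = B \times M^{m}$, the evaluation map $\Pi = pr \times ev_{g,m,s_A}$ decomposes (up to a permutation of factors) as
\[
(b,[(\Sigma,z),u]) \;\mapsto\; \bigl((\tilde\Sigma,\tilde z),\,b,\,u(z_1),\dots,u(z_m)\bigr)
\;=\; \bigl(\Pi^{M}([(\Sigma,z),u]),\,b\bigr),
\]
where $\Pi^M = pr \times ev^M_{g,m,A}$ is the corresponding map for $M$. Pushing forward the virtual fundamental class therefore gives, in $H_*(C\Mm_{g,m} \times B \times M^m,\Q)$,
\[
\Pi_*\bigl[C\Mm_{g,m}(E,J(E),s_A)\bigr] \;=\; \Pi^{M}_*\bigl[C\Mm_{g,m}(M,J_0,A)\bigr] \times [B].
\]

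Finally, for $2g+m \ge 3$ apply \eqref{(3.1.3)} and the K\"unneth formula: the slant product of a pure tensor $\beta \otimes \alpha_1 \otimes \cdots \otimes \alpha_m \in H^*(B) \otimes H^*(M)^{\otimes m}$ against the above product class factors as a slant over $M^m$ tensored with the pairing $\langle \beta,[B] \rangle = \int_B \beta$; Poincar\'e duality on $C\Mm_{g,m}$ then yields \eqref{(3.3.4)}. For $2g+m = 0,1$ the identity is immediate from \eqref{(3.1.3')} since $\langle \beta \otimes \alpha_1 \otimes \cdots \otimes \alpha_m,\,ev^M_*[\cdots] \times [B]\rangle$ splits directly. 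The only real subtlety — and the step I would treat most carefully — is ensuring that the multi-section perturbations used to define the virtual fundamental class can be chosen so that the product Kuranishi structure is preserved; this is handled by the fibered construction of charts in Lemma \ref{Lemma 3.1}, which in the trivial case degenerates to an honest product.
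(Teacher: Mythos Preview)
Your proposal is correct and follows essentially the same approach as the paper: choose a constant fiberwise $J$, identify the parameterized moduli space with $B \times C\Mm_{g,m}(M,J,A)$, take the multi-section perturbation pulled back from the $M$-factor so that the virtual class is a product $[B] \times \Pi^{pt}_*[C\Mm_{g,m}(M,J,A)]$, and then factor the slant product via K\"unneth. Your write-up is in fact somewhat more careful than the paper's, particularly in separating out the $2g+m = 0,1$ case and in spelling out why the Kuranishi charts inherit a product form.
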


\begin{proof} 
We choose  a  vertical compatible almost complex structure
$J(E=B \times M)$ such that it is constant in the $B$-direction.  
Clearly the moduli space
of vertical stable maps $C\Mm _{g,m}(E=B \times M,\{J_b \equiv J\}, s_A)$ is
the direct product $B \times C\Mm_{g,m}(M,J,A)$. 
We take a multi-valued perturbation of Kuranishi map for 
$C\Mm_{g,m}(M,J,A)$ to define the virtual fundamental cycle of 
$B \times C\Mm_{g,m}(M,J,A)$.  
Denote by $\Pi^{pt}$
the evaluation map in section 3.1 for $C\Mm_{g,m}(M,J,A)$, i.e. 
the case with base $B= pt$. By (\ref{(3.1.3)}), 
the left hand side  of
(\ref{(3.3.4)}) equals
\begin{equation}
\begin{split}
& PD(\beta \otimes \alpha_1\otimes \cdots \otimes \alpha _m \backslash \Pi_*
[B \times C\Mm_{g,m}(M,J,A)]) \\
= & PD(\beta \otimes \alpha_1\otimes \cdots \otimes \alpha_m \backslash
([B]\times \Pi^{pt}_* [C\Mm_{g,m}(M,J,A)]) \label{(3.3.5)}
\end{split}
\end{equation}

Clearly the right hand side of (\ref{(3.3.4)}) equals the right hand 
side of (\ref{(3.3.5)}). 
\end{proof}

Now let us compute parameterized Gromov-Witten invariants of
a pull-back symplectic fiber bundle. 
Let $p:B_1 \to B_2$ be a $k$-fold covering space and $E_2 \to B_2$ a symplectic 
fiber bundle.  
Then the pull-back $E_1=p^*E_2 \to B_1$ is also a symplectic fiber bundle.  
For a single section $s_A$ of $\Hh_2(E_2)$, denote by $p^*s_A$ its pull back.  
We get immediately the following 

\begin{proposition} 
We have
$$I^{E_1}_{g,m,p^*(s_A)} = k I^{E_2}_{g,m,s_A}.$$
\end{proposition}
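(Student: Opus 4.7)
The plan is to exhibit $C\Mm_{g,m}(E_1, p^*J(E_2), p^*s_A)$ as a $k$-fold étale cover of $C\Mm_{g,m}(E_2, J(E_2), s_A)$, compatibly with Kuranishi structures and evaluation maps; the identity then follows by pushing forward virtual fundamental cycles. Concretely, pick a vertical compatible almost complex structure $J(E_2)$ on $E_2$ and pull it back to $p^*J(E_2)$ on $E_1$. Because $p\colon B_1 \to B_2$ is a covering, the fiber of $E_1$ over each $b_1 \in B_1$ is canonically identified, together with its almost complex structure and the locally constant homology section, with the fiber of $E_2$ over $p(b_1)$. Consequently a vertical $p^*J(E_2)$-stable map over $b_1$ representing $p^*s_A(b_1)$ is the same data as a vertical $J(E_2)$-stable map over $p(b_1)$ representing $s_A(p(b_1))$, giving a natural map
$$\tilde{p}\colon C\Mm_{g,m}(E_1, p^*J(E_2), p^*s_A) \to C\Mm_{g,m}(E_2, J(E_2), s_A)$$
which is a $k$-fold covering: each vertical stable map over $b_2 \in B_2$ has exactly $k$ lifts, one over each $b_1 \in p^{-1}(b_2)$.

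Next, I would verify that $\tilde{p}$ respects the Kuranishi structures built in Lemma \ref{Lemma 3.1}. Because $p$ is étale, the local Kuranishi chart $(U,\Ee,s)$ around a point upstairs is canonically isomorphic to the chart around its $\tilde{p}$-image downstairs: the obstruction and deformation spaces depend only on fiberwise data, which transports identically under the fiber identification above. Hence any transverse perturbed multisection $s'$ of the obstruction bundle of $C\Mm_{g,m}(E_2,\ldots)$ pulls back to a transverse perturbation $\tilde{p}^*s'$ upstairs, and
$$\tilde{p}_*[C\Mm_{g,m}(E_1, p^*J(E_2), p^*s_A)] = k \cdot [C\Mm_{g,m}(E_2, J(E_2), s_A)]$$
as rational virtual fundamental cycles. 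The main point requiring care is this coherence step: one must use the pulled-back perturbation upstairs, not an independently chosen one, otherwise the $k$-to-$1$ covering structure is destroyed at the level of perturbed zero loci.

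Finally, the covering $p$ induces a $k$-fold covering $p^{(m)}\colon E_1^{(m)} \to E_2^{(m)}$ of Whitney sums, and since evaluation at marked points commutes with the fiber identifications and the Deligne--Mumford factor is unaffected, there is a commutative diagram
$$\begin{array}{ccc}
C\Mm_{g,m}(E_1,\ldots) & \xrightarrow{\tilde{p}} & C\Mm_{g,m}(E_2,\ldots) \\
\downarrow \Pi^{E_1} & & \downarrow \Pi^{E_2} \\
C\Mm_{g,m} \times E_1^{(m)} & \xrightarrow{\mathrm{id}\times p^{(m)}} & C\Mm_{g,m} \times E_2^{(m)}.
\end{array}$$
Combining with the previous step,
$$(\mathrm{id}\times p^{(m)})_* \Pi^{E_1}_*[C\Mm_{g,m}(E_1, \ldots)] = k\, \Pi^{E_2}_*[C\Mm_{g,m}(E_2, \ldots)].$$
Evaluating on a class of the form $p^{(m)*}\gamma$ with $\gamma \in H^*(E_2^{(m)}, \Q)$ and applying the projection formula, the definitions (\ref{(3.1.3)}), (\ref{(3.1.3')}) yield the asserted identity $I^{E_1}_{g,m,p^*(s_A)} = k\, I^{E_2}_{g,m,s_A}$.
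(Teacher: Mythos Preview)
Your proposal is correct and is precisely the natural argument: pull back $J(E_2)$, identify the parametrized moduli space upstairs as a $k$-fold cover of the one downstairs (compatibly with Kuranishi data and evaluation maps), and conclude via the projection formula. The paper gives no proof of this proposition at all---it simply states the result as immediate---so your write-up is in fact more detailed than the original. One small point you handled correctly that deserves emphasis: since $I^{E_1}$ and $I^{E_2}$ have different domains $H^*(E_1^{(m)})$ and $H^*(E_2^{(m)})$, the asserted equality must be read as $I^{E_1}_{g,m,p^*(s_A)}\circ (p^{(m)})^* = k\, I^{E_2}_{g,m,s_A}$, exactly as you indicate in your final step.
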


Parameterized Gromov-Witten invariants of relative degree 0 and without marked points satisfy the following additivity.

\begin{proposition}
Let $E = E_1\# E_2$ be  a fiber
connected sum of symplectic fiber bundles $E_1$ and $E_2$. Then we
have the following formula for parameterized Gromov-Witten invariants 
of relative degree 0.
$$I^E_{g,0,s_A} = I^{E_1}_{g,0,s_A} + I^{E_2}_{g,0,s_A}.$$
\end{proposition}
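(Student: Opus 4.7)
The plan is to choose on $E = E_1 \# E_2$ a vertical compatible almost complex structure that is ``product'' on a neighborhood of the gluing locus, and then to exploit the fact that vertical stable maps live inside single fibers in order to decompose the parameterized moduli space into contributions from each side.

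First, using the contractibility of $\Jj(M)$, I would fix a generic $J_0 \in \Jj(M)$ and extend it to vertical compatible almost complex structures $J_i$ on $E_i$ so that $J_i$ equals the product $J_0$ on $E_i|_{D_i} \cong D_i \times M$ via the chosen trivialization. The fiber connected sum identifies $\partial D_1 \times M$ with $\partial D_2 \times M$, and since both $J_1$ and $J_2$ restrict to $J_0$ there, they glue to a well-defined vertical almost complex structure $J$ on $E$. The relative degree $0$ hypothesis makes $\Mm := C\Mm_{g,0}(E, J, s_A)$ and $\Mm_i := C\Mm_{g,0}(E_i, J_i, s_A)$ Kuranishi spaces of virtual dimension $0$; since vertical stable maps live in single fibers, each $\Mm_i$ fibers over $B_i$, and the fiber over $b \in D_i$ is the non-parameterized moduli space $X := C\Mm_{g,0}(M, J_0, A)$, whose virtual dimension is $-\dim B_i < 0$.

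By standard Fukaya-Ono genericity, I would arrange $J_0$ and a multi-valued Kuranishi perturbation so that $X$ has an empty perturbed zero set. Then $\Mm_i$ has no curves over $D_i$ and $\Mm$ has none over the gluing locus, so
$$
\Mm \;=\; \Mm|_{B_1 \setminus D_1^\circ} \,\sqcup\, \Mm|_{B_2 \setminus D_2^\circ},
$$
and the fiber-wise identification $E|_{B_i \setminus D_i^\circ} = E_i|_{B_i \setminus D_i^\circ}$ of symplectic fiber bundles with their vertical almost complex structures gives natural isomorphisms of Kuranishi spaces $\Mm|_{B_i \setminus D_i^\circ} \cong \Mm_i|_{B_i \setminus D_i^\circ} = \Mm_i$. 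Taking weighted virtual counts then yields the additivity $I^E_{g, 0, s_A} = I^{E_1}_{g, 0, s_A} + I^{E_2}_{g, 0, s_A}$.

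The main obstacle will be arranging the Fukaya-Ono multi-valued perturbations of the three Kuranishi spaces compatibly, so that the perturbed zero sets respect the above decomposition. Because the decomposition is set-theoretically disjoint once $X$ is emptied, one can simply transplant the perturbations chosen for $\Mm_i|_{B_i \setminus D_i^\circ}$ onto the corresponding portions of $\Mm$, with the emptiness of $X$ guaranteeing there is nothing to reconcile along the gluing. This reduces the proposition to the standard fact that a Kuranishi space of strictly negative virtual dimension has an empty generic perturbed zero set.
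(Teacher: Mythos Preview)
Your proposal is correct and follows essentially the same route as the paper: both arguments use that relative degree $0$ forces the non-parameterized moduli space over a single fiber to have negative virtual dimension, so after perturbation there are no vertical stable curves over the gluing disks, and the almost complex structures (chosen constant and matching on the disks) glue to give a decomposition of the parameterized moduli space into the two pieces. The paper's proof is terser---it simply asserts one can perturb so that no curves lie over $D_i(\eps)$ and that the fiberwise almost complex structures can be taken constant and isomorphic there---whereas you spell out the Kuranishi-perturbation compatibility more carefully; but the underlying idea is identical.
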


\begin{proof}   
By the dimension assumption of the Gromov-Witten invariants, we take perturbation, 
if necessary, so that there is no vertical stable curves representing the class $A$ 
over a small disk $D_i(\eps)$ in the base $B_i$ for $i =1,2$.   
We can assume further that our fiberwise almost complex structures on 
$D_i(\eps)$, $i=1,2$, are constant and isomorphic each other.   
Now we perform the connected sum of symplectic fiber bundles  using these disks. The almost complex structures  on $E_i$  can be glued together  identifying
their restrictions on $D_i (\eps) \times M$.  Hence
we obtain the proposition. 
\end{proof}

\begin{remark}For symplectic fiber bundles $\pi_i: E_i \to B_i$ with simple 
local systems $\Hh^*(E_i)$, $\alpha_j \in H^{q_j}(M;\Q)$ defines 
the locally constant sections of $\Hh^{q_j}(E_i)$, 
$i=1,2$, and $\Hh^{q_j}(E_1 \# E_2)$.   
Suppose that there exist cohomology classes $\widetilde{\alpha}_j^{(i)} 
\in H^{q_j}(E_i;\Q)$ such that their restrictions to typical fibers 
coincide with $\alpha_j$.  
Let $\widetilde{\alpha}_j \in H^{q_j}(E_1 \# E_2;\Q )$ be 
a cohomology class, which is equal to $\widetilde{\alpha}_j^{(i)}$ 
after restricting to $\pi_i^{-1}(B_i \setminus D_i(\eps ))$.  
When $\mu + \sum_j q_j = 6 (g-1)$, 
we have 
$$
I^E_{g,m,s_A} (\prod_j \widetilde{\alpha}_j) = I^{E_1}_{g,m,s_A} 
(\prod_j \widetilde{\alpha}_j^{(1)}) + I^{E_2}_{g,m,s_A} (\prod_j \widetilde{\alpha}_j^{(2)}).  
$$  
\end{remark}

At the end of this section we would like to suggest that many properties of the Gromov-Witten
invariants (e.g. the Kontsevich-Manin axioms) should be valid in the family version.
Specially interesting seems to us an analog of Taubes' theorem on the relation of Gromov-Witten
invariants and Seiberg-Witten invariants in dimension 4.   
It would imply that the parametrized Gromov-Witten
invariants also bring information on the homotopy type of 
the diffeomorphism group of 4-dimensional symplectic manifolds.   
%Theorem \ref{Theorem 4.7} provides a hint for that conjecture.

\section{Homotopy groups of symplectomorphism groups.}

In this section we combine Remark 2.3, Propositions 3.5, 3.6, 3.7
and other observations to 
estimate the rank of homotopy groups of symplectomorphism groups. 

\begin{proposition}\label{Prop 4.1}
Parameterized Gromov-Witten invariants of relative degree $0$ and 
without marked points $I^E_{g,0,s_A}$ over sphere $S^{i+1}$
define elements in $Hom (\pi_i({\rm Symp} (M,\om)),\Q)$, $i\geq 1$.
\end{proposition}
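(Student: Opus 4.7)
My plan is to define, for each genus $g \ge 0$ and class $A \in H_2(M,\Z)$, a map
$$
\Phi_{g,A} : \pi_i({\rm Symp}(M,\om)) \to \Q, \qquad [\phi] \mapsto I^{E_\phi}_{g,0,s_A},
$$
where $E_\phi \to S^{i+1}$ is the symplectic fiber bundle associated to $\phi$ via Remark 2.3, and then to verify that $\Phi_{g,A}$ is well-defined on homotopy classes and additive. Because $i \ge 1$, the sphere $S^{i+1}$ is simply connected, so the local system $\Hh_2(E_\phi)$ is simple and the section $s_A$ corresponding to $A$ is globally defined; hence $I^{E_\phi}_{g,0,s_A} \in \Q$ is meaningful.

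For well-definedness, I would note that a homotopy in ${\rm Symp}(M,\om)$ between representatives $\phi_0$ and $\phi_1$ yields a symplectic fiber bundle over $S^{i+1}\times [0,1]$ restricting to $E_{\phi_0}$ and $E_{\phi_1}$ at the two ends; compatibility of the sections $s_A$ across the two ends is automatic since the base $S^{i+1}\times[0,1]$ is simply connected, and Theorem 3.3 then gives $I^{E_{\phi_0}}_{g,0,s_A}=I^{E_{\phi_1}}_{g,0,s_A}$.

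The key step is the additivity. Given $[\phi_1],[\phi_2]\in\pi_i({\rm Symp}(M,\om))$, I would identify $E_{\phi_1\cdot \phi_2}$ with the fiber connected sum $E_{\phi_1}\# E_{\phi_2}$ (in the sense of section 2.3) as symplectic fiber bundles over $S^{i+1}$. Granting this identification, Proposition 3.7 immediately gives
$$
\Phi_{g,A}([\phi_1]\cdot[\phi_2]) \;=\; I^{E_{\phi_1}\#E_{\phi_2}}_{g,0,s_A} \;=\; I^{E_{\phi_1}}_{g,0,s_A} + I^{E_{\phi_2}}_{g,0,s_A} \;=\; \Phi_{g,A}([\phi_1]) + \Phi_{g,A}([\phi_2]).
$$
In the case $i=1$, the group $\pi_1({\rm Symp}(M,\om))$ need not be abelian, but since $\Q$ is abelian this causes no difficulty: $\Phi_{g,A}$ is still a homomorphism and simply factors through the abelianization.

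The main obstacle I expect is the geometric identification $E_{\phi_1\cdot \phi_2}\cong E_{\phi_1}\# E_{\phi_2}$. The guiding picture: the classifying map $S^{i+1}\to B{\rm Symp}(M,\om)$ of $E_{\phi_1\cdot\phi_2}$ factors up to homotopy through the pinch map $S^{i+1}\to S^{i+1}\vee S^{i+1}$ followed by the fold of the classifying maps of $E_{\phi_1}$ and $E_{\phi_2}$, and the geometric realization of this pinch-and-fold on bundles is precisely the connected-sum recipe of removing two small trivialized disks from the two base spheres and gluing along their boundaries. Once this identification is carried out, compatibility of the distinguished sections $s_A$ is automatic by simply-connectedness of the base, and the result follows.
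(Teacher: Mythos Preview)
Your proposal is correct and follows essentially the same approach as the paper: construct $E_\phi$ via the clutching of Remark~2.3, identify $E_{\phi_1\cdot\phi_2}\cong E_{\phi_1}\# E_{\phi_2}$, and invoke Proposition~3.7 for additivity. You supply more detail than the paper does (explicit well-definedness via Theorem~3.3, the pinch-map picture, the abelianization remark), but the argument is the same.
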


\begin{proof}
Denote by $E_g$ the symplectic fiber bundle over $S^{i+1}$ by gluing 
$D^{i+1}_1\times M \cup_g D^{i+1}_2\times M$ with the help of a map $g: \p D^ {i+1}_1 \to {\rm Symp}  (M)$,
i.e. we identify a pair $(x,y)\in  \p D^{i+1}_1 \times M$ with $(x, g(x) \cdot y) \in
\p D^{i+1}_2\times M$.  
Then we have
$E_{g\cdot f} \cong E_g \# E_f$.   
Now we get Proposition 4.1 immediately from Remark 2.3
and Proposition 3.7.  
\end{proof}

\begin{remark}\label{Rem 4.1.1}
Taking into account Remark 3.8 we can get a similar statement 
for parameterized Gromov-Witten invariants of relative degree 0.  
We use such invariants in the proof of Theorem \ref{Theorem 4.4}.a.
\end{remark}

As an application of Proposition \ref{Prop 4.1}, Remark \ref{Rem 4.1.1}, 
we shall prove Theorem \ref{Theorem 4.3} and Theorem \ref{Theorem 4.4}.

Let $(M,\om) = (S^2\times S^2, \om^{(1)} \oplus \om^{(2)})$ be a product 
of symplectic manifolds. 
We denote by $A_i$ the generators of $H_2(M,\Z)$ realizing by 
the $i$-th sphere $S^2$. 

\begin{theorem}\label{Theorem 4.3}
If $\om^{(1)} (A_1) > \om^{(2)} (A_2)$ then there is an 
$S^2\times S^2$-symplectic fiber bundle over $S^2$ with non-vanishing   
parameterized Gromov-Witten invariants of relative degree $0$ 
and $m=0$.
In particular the rank of $\pi_1({\rm Symp}  (S^2\times S^2, \om))$ is
at least 1.
\end{theorem}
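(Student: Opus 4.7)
The plan is to exhibit a single symplectic $(S^2 \times S^2)$-bundle $\pi : E \to S^2$ with nonzero parameterized Gromov--Witten invariant in a suitable class. By Remark 2.3 (any $g \in \pi_1({\rm Symp}(M,\om))$ classifies such a bundle $E_g \to S^2$, and loop concatenation corresponds to fiber connected sum) together with Proposition 3.7 (in the extended form of Remark 3.8), the assignment $g \mapsto I^{E_g}_{0,0,s_A}$ descends to a homomorphism $\pi_1({\rm Symp}(M,\om)) \to \Q$; a single nonvanishing value then forces the classifying loop to have infinite order, yielding the rank estimate.

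The natural choice of class is $A = A_1 - A_2 \in H_2(M,\Z)$. By hypothesis $\langle \om, A\rangle = \om^{(1)}(A_1) - \om^{(2)}(A_2) > 0$, so positive-area vertical holomorphic representatives can occur, while $\langle c_1(M), A\rangle = 2 - 2 = 0$. Lemma \ref{Lemma 3.1} then gives expected real dimension $\dim S^2 + 2\langle c_1(M), A\rangle + (6-\dim M)(g-1) = 2 + 0 - 2 = 0$ for the vertical genus-$0$ moduli with no marked points, so $I^E_{0,0,s_A}$ is automatically a rational number and the additivity of Proposition 3.7 applies in the appropriate form.

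For the construction, I would take $E$ to be a smooth K\"ahler $3$-fold fibered holomorphically over $\C P^1$ whose generic fiber is biholomorphic to $\C P^1 \times \C P^1$ equipped with a K\"ahler form in the class $[\om^{(1)} \oplus \om^{(2)}]$, and whose distinguished fiber is biholomorphic to the Hirzebruch surface $F_2$ with a matching K\"ahler class (such a matching class on $F_2$ exists precisely because $\om^{(1)}(A_1) > \om^{(2)}(A_2)$). Such a total space arises classically from a pencil of quadrics in $\C P^3$ with one singular fiber, after a small resolution. Taking $J(E)$ equal to this integrable structure, the vertical moduli space admits an explicit fiberwise description: on every fiber other than the $F_2$ one the class $A_1 - A_2$ is not effective (the effective cone of $\C P^1 \times \C P^1$ with standard complex structure is generated by $A_1$ and $A_2$), while on the $F_2$ fiber $A_1 - A_2$ is represented by the unique exceptional $(-2)$-section. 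Thus the entire vertical moduli reduces to one isolated embedded rational curve, and an automatic regularity argument using $c_1(M) \cdot A = 0$ and the splitting of its normal bundle in $E$ (the relevant $H^1$ vanishes by Riemann--Roch) identifies its signed contribution as $\pm 1$. For comparison, on the trivial bundle $S^2 \times M$ the parameterized invariant in class $A_1 - A_2$ vanishes, since this class has no holomorphic representatives on $\C P^1 \times \C P^1$ with its standard complex structure; this shows $E$ is genuinely nontrivial.

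The main obstacle will be the automatic regularity step at this single moduli point: because $A_1 - A_2$ is non-effective on every fiber other than the distinguished $F_2$ one, no generic perturbation of $J(E)$ in the vertical direction is freely available, and surjectivity of the parameterized Cauchy--Riemann operator must be verified directly at the exceptional section. The vanishing $c_1 \cdot A = 0$ is precisely what makes this work via a Riemann--Roch count on the parameterized normal bundle; once surjectivity is in hand, the orientation/sign check is routine, and Propositions 3.7 and 4.1 combine to give the rank conclusion.
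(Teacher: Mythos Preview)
Your approach is essentially the paper's main proof: construct a holomorphic family over $\C P^1$ with generic fiber $F_0 = \C P^1 \times \C P^1$ and one special fiber $F_2$, observe that the vertical moduli space in class $A_1 - A_2$ is the single $(-2)$-section in $F_2$, verify transversality, and invoke Proposition~4.1. The paper builds the family explicitly by deforming the transition matrix of $\Oo(-1)\oplus\Oo(1)$ rather than via a pencil of quadrics, but this is a cosmetic difference; the moduli-space identification and the use of the invariant $I^E_{0,0,A_1-A_2}$ are identical.

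The one place your outline is too optimistic is the regularity step. Riemann--Roch alone gives only $\chi(N)=0$ for the normal bundle $N$ of the $(-2)$-curve in the threefold $E$, so $h^0(N)=h^1(N)$; you must still rule out $N\cong\Oo(-2)\oplus\Oo(0)$ (which has $h^1=1$) in favor of $N\cong\Oo(-1)\oplus\Oo(-1)$. The paper does this by examining the short exact sequence $0\to\Oo(-2)\to N\to\Oo_D\to 0$ (normal bundle in the fiber, then normal to the fiber) and computing explicitly that the connecting homomorphism $H^0(\Oo_D)\to H^1(\Oo(-2))$ is an isomorphism---equivalently, that the first-order variation of complex structure in the base direction kills the fiberwise obstruction. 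That computation is the genuine content of the transversality check; invoking ``Riemann--Roch'' does not by itself supply it, and your phrase ``automatic regularity'' understates what is needed.
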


We recall that the last statement in Theorem \ref{Theorem 4.3} is 
established in Theorem 2.4. C$_2$ in \cite{Gromov1985}.

{\it Proof of Theorem \ref{Theorem 4.3}}.  
There are several ways of describing  the proof of
this theorem (see also Appendix, which 
follows the original idea of Gromov \cite{Gromov1985}).  
We present here a proof following 
the idea of McDuff in \cite{McDuff1987}, Lemma 3.1, which  
uses the deformation space of 
complex structures of Hirzebruch's surfaces of even degree, 
which is diffeomorphic to $S^2\times S^2$, see \cite{M-K1971}.  
Thus we can apply technique in complex analytic geometry for our computation.

Denote by $\Oo(\ell)$ the holomorphic line bundle of degree $\ell$ 
on $\C P^1$.  
Let us recall that the Hirzebruch surface $F_k$ is
the projectivization of a rank 2  holomorphic vector bundle 
$W_k=\Oo(0)\oplus \Oo (k)$ with $k \geq 0$ over $\C P^1$. 
The line subbundles $\Oo(0) \oplus 0$ and $0 \oplus \Oo(k)$ define 
sections of $F_k={\bf P}(W_k) \to \C P^1$ 
with self-intersection number $k$ and $-k$, respectively.  
We denote by $J^{F_k}$ the complex structure on the Hirzebruch surface 
$F_k$.  
It is known that all Hirzebruch surfaces with 
even degree $k$ are diffeomorphic to $S^2\times S^2$, and $F_0$ 
is biholomorphic 
to ${\C}P^1 \times {\C}P^1$ (see e.g., \cite{M-K1971}, chapter 1).

We define a family of holomorphic vector bundles $\{V_a\}$ of rank 2 
on $\C P^1$ as follows.  
Write $U_0 = \C P^1 \setminus \{\infty \}$ and 
$U_{\infty}=\C P^1 \setminus \{ 0\}$.  
Consider the transition function $f_a: (U_0 \setminus \{ 0\}) \times 
\C^2 \to (U_{\infty} \setminus \{\infty \}) \times \C^2$ by 
$$
f_a(z,v_1,v_2)=(z,zv_1 + av_2, z^{-1}v_2).
$$
Denote by $X_a$ the projectivization of $V_a$ and $J_a$ the complex 
structure on it.  
Note that $V_{a=0}$ is isomorphic to $\Oo(-1) \oplus \Oo(1)$ 
and that ${\bf P}(V_{a=0})$ is isomorphic to ${\bf P}(W_2)$.   
Thus $\{X_a\}$ is a complex one-dimensional deformation of 
$X_0=F_2$.  
The complex structure $J_0$ is the complex structure $J^{F_2}$.  
All $J_a$, $a \neq 0$, are isomorphic to the complex structure $J^{F_0}$, 
i.e., the product complex structure.  

Note that 
$$\Xx = \cup_{a\in {\bf C}^1} \{a\} \times X_a \to {\bf C}^1 \times {\bf C}P^1$$
is the projectivization of the vector bundle 
$$\Vv = \cup_{a\in {\bf C}^1} \{a\} \times V_a \to {\bf C}^1 \times {\bf C}P^1.$$  
Since the restriction of the vector bundle $\Vv$ to 
$({\bf C} \setminus \{0\}) \times {\bf C}P^1$ is 
holomorphically trivialized 
by the following 2 sections $\sigma_1$ and
$\sigma_2$, which are everywhere linearly independent:

\begin{equation}
\sigma_1 (a,z) = 
\begin{cases}
(z,0,1) \in U_0 \times \C^2, \ \ {\rm if} \ z \in U_0 \\
(z,a,\frac{1}{z}) \in U_{\infty} \times \C^2, \ \ {\rm if} \ z \in U_{\infty}
\end{cases} 
\nonumber
\end{equation}

\begin{equation}
\sigma_2 (a,z) = 
\begin{cases}
(z,1,-\frac{z}{a}) \in U_0 \times \C^2, \ \ {\rm if} \ z 
\in U_0 \\
(z,0,-\frac{1}{a}) \in U_{\infty} \times \C^2, \ \ {\rm if} \ z \in 
U_{\infty} 
\end{cases} 
\nonumber
\end{equation}

Note that 
$$f_a(z,0,1) = (z,a,\frac{1}{z}), \ \ 
f_a(z,1,-\frac{z}{a})=(z,0,-\frac{1}{a}).$$
Thus $\sigma_1, \sigma_2$ are well-defined holomorphic sections.  
Using this trivialization, we extend $\Vv$ and $\Xx$ across 
$\{\infty\} \times \C P^1$ to $\C P^1 \times \C P^1$.  
We denote these extensions by the same symbols.  
$\Xx$ is also considered as an $S^2 \times S^2$-bundle with the projection 
to the first factor $\C P^1$ parameterized by $a$.  
Denote this fiber bundle by $E \to \C P^1$.  
The complex structure on $\Xx$ induces the fiberwise complex structure $J(E)$.  

Clearly the parameterized Gromov-Witten invariant $I_{0,0,A_1-A_2}$  
is of relative degree 0.  
We shall show that its value $I^E_{0,0,A_1-A_2}$ is 1. 
Note that there is no $J^{F_0}$-holomorphic sphere realizing  class $(A_1-A_2)$, 
since $J^{F_0} = \C P^1 \times \C P^1$.  
Further, there is exactly one $J^{F_2}$-holomorphic sphere
realizing class $(A_1-A_2)$, which is the  section of self-intersection number $-2$ 
in the $\C P^1 $ bundle over $\C P^1$. 
Thus the moduli space
$C\Mm_{0,0, A_1-A_2}(E, J(E))$ consists of one point at $a=0$.  

To prove the transversality of this moduli space, we argue as follows.
Let $N$ be the normal bundle of the unique $J^{F_2}$-holomorphic sphere 
in the class $(A_1-A_2)$ in the total space of fibration.  
In order to show the transversality in the case of an integrable
complex structure, it suffices to prove the following 

\begin{lemma}
We have $H^1(\C P^1;\Oo(N))=0$.  Hence, 
$N = \Oo(-1) \oplus \Oo (-1)$. 
\end{lemma}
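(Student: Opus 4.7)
My plan is to realise $N$ as an explicit extension on $C\cong \C P^1$ and to compute its extension class directly from the transition function $f_a$.  Using the chain $C\subset X_0\subset \Xx$, the standard normal bundle sequence gives
$$
0\to N_{C/X_0} \to N \to N_{X_0/\Xx}\vert_C \to 0,
$$
with $N_{C/X_0}=\Oo(-2)$ (since $C$ is the negative section of $F_2$) and $N_{X_0/\Xx}\vert_C = \Oo$ (since $X_0$ is a fibre of the projection $\Xx\to\C P^1$ to the $a$-coordinate, hence has canonically trivial normal bundle).  Thus $N$ is classified by an element of $\mathrm{Ext}^1(\Oo,\Oo(-2))\cong H^1(\C P^1,\Oo(-2))\cong \C$: the zero class gives the split extension $\Oo(-2)\oplus\Oo$ (with $H^1=\C$), while any non-zero class forces $N\cong\Oo(-1)\oplus\Oo(-1)$ (the only other rank two bundle on $\C P^1$ with $c_1=-2$, for which $H^1=0$).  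So the lemma reduces to showing that this extension is not split.

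To compute the class I would use inhomogeneous fibre coordinates $w=v_1/v_2$ on $U_0\times\C$ and $\tilde w=\tilde v_1/\tilde v_2$ on $U_\infty\times\C$ for ${\bf P}(\Vv)$.  The formula for $f_a$ yields $\tilde w = z^2 w + a z$, and $C$ is cut out by $\{a=0,w=0\}$ in each chart (as it corresponds to the sub-bundle $\Oo(1)\hookrightarrow V_0$).  Differentiating along $C$ gives $\partial_w = z^2\partial_{\tilde w}$ and $\partial_a = z\partial_{\tilde w}+\partial_a$, so in these natural normal frames the transition matrix of $N$ from $U_0$ to $U_\infty$ is upper triangular with first row $(z^2,z)$ and second row $(0,1)$.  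Lifting $1\in H^0(\Oo)$ to $(0,1)$ in each trivialisation and comparing the two lifts on $U_0\cap U_\infty$ yields the \v{C}ech cocycle $z^{-1}$ in the $U_0$-trivialisation of the sub-bundle $\Oo(-2)$.  Since every coboundary in $\Oo(-2)$ over $U_0\cap U_\infty$ is a sum of a polynomial in $z$ and a Laurent series in $1/z$ of lowest order $z^{-2}$, the class of $z^{-1}$ is non-zero in $H^1(\C P^1,\Oo(-2))$; hence $N\cong\Oo(-1)\oplus\Oo(-1)$ and $H^1(\C P^1,\Oo(N))=0$.

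The main obstacle is the bookkeeping in the second step: one has to track the two trivialisations of $\Vv$, the induced fibre coordinates on ${\bf P}(\Vv)$, and the conventions identifying the sub-bundle with $\Oo(-2)$.  Conceptually the non-splitness reflects the fact that the destabilising sub-bundle $\Oo(1)\subset V_0$ does not extend to a sub-line-bundle of $\Vv$ in any neighbourhood of $a=0$: for $a\neq 0$ the bundle $V_a\cong\Oo\oplus\Oo$ is semistable of slope zero and admits no positive-degree sub-line-bundle.
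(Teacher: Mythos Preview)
Your proposal is correct and follows essentially the same route as the paper: both set up the normal-bundle exact sequence $0\to\Oo(-2)\to N\to\Oo\to 0$ coming from $C\subset X_0\subset\Xx$, and both verify non-splitting by an explicit \v{C}ech computation of the extension class (equivalently, the connecting map $H^0(\Oo)\to H^1(\Oo(-2))$). The paper carries this out on the twisted bundle $\Oo(-1)\otimes V_a$ and obtains the cocycle $z$ in the $U_\infty$-frame of $\Oo(-2)$, which matches your $z^{-1}$ in the $U_0$-frame after the change $\partial_w=z^2\partial_{\tilde w}$; your use of the inhomogeneous fibre coordinate $w=v_1/v_2$ is a slight cosmetic simplification but the argument is the same.
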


\begin{proof}
Consider the cohomology exact sequence associated to
$$0 \to \Oo(-2) \to N  \to \Oo _D \to 0 ,$$
where $\Oo (-2)$ is the normal bundle of the $(-2)$-curve in the central 
fiber $X_{a=0} \subset \Xx$ with the complex structure $J^{F_2}$,  
and the third term $\Oo_D$ is the quotient,
which is nothing but the pull back of the normal bundle of the
origin in $\C P^1$ parameterized by $a$.  
We shall show that  the connecting homomorphism
$H^0 (\Oo_D) \to H^1 (\Oo(-2))$ is surjective, (hence isomorphism).  
As a consequence we get $H^1 (N)=0$.

Here we regard $X_a$ as the projectivization of 
$\Oo(-1) \otimes V_a$.  When $a=0$, it is isomorphic to 
$\Oo(-2) \oplus \Oo(0)$.  
The vector bundle $\Oo(-1) \otimes V_a$ is written as 
the gluing $U_0 \times \C^2$ and $U_{\infty} \times \C^2$ by 
$$(z,v_1,v_2) \mapsto (z, z^2 v_1 + a z v_2, v_2).$$ 
Note that the $(-2)$-curve representing $A_1 - A_2$ is the image of 
$\{0\} \otimes \Oo(0)$, 
hence the image of the section $(0,1)$ in 
${\bf P}(\Oo(-2) \oplus \Oo(0))$ over $a=0$.  

Consider the restrictions of the section $(0,1)$ of 
$\Oo(-1) \otimes V_{a=0}$ over $U_0$ and $U_{\infty}$, respectively.  
For $a \in \C$, the pair $z \in U_0 \mapsto (z,0,1) \in U_0 \times \C^2$ and 
$z \in U_{\infty} \mapsto (z,0,1) \in U_{\infty} \times \C^2$ gives a deformation 
as a Cech $0$-cocycle.  
Taking the Cech differential, we get a Cech $1$-cocycle 
$z \in U_0 \cap U_{\infty} 
\subset U_{\infty} \mapsto (z, az, 0) \in (U_{\infty} \setminus 
\{\infty\}) \times \C^2$.  
Differentiate in $a$, then we find that the last component 
of the Cech $1$-cocycle vanishes and obtain a Cech $1$-cocycle 
$z \in U_0 \cap U_{\infty} 
\subset U_{\infty} \mapsto (z, z) \in (U_{\infty} \setminus 
\{\infty\}) \times \C$ of $\Oo(-2)$, which represents a non-zero element in 
$H^1(\C P^1;\Oo(-2))$.  
Hence we conclude that the connecting homomorphism 
$H^0 (\Oo_D) \to H^1 (\Oo(-2))$ is surjective.  
\end{proof}

Let us generalize Theorem \ref{Theorem 4.3}. We denote by $(X_1^4,\om_1)$ a non-monotone symplectic manifold
which is diffeomorphic to $S^2\times S^2$ and by $(X_2^4,\om_2)$ a symplectic manifold whch is diffeomorphic
to $\C P^2\# \overline{\C P^2}$. We are going to prove the following 

\begin{theorem}\label{Theorem 4.4}
a) Let $(M_1,\Om_1)$ $= (X_1^4\times N^{2k},\om_1\oplus \om_0)$ be 
a symplectic manifold with $(X_1,\om_1)$ as above and 
$(N^{2k},\om)$  a compact symplectic manifold. 
Then we have $rk (\pi_1 ({\rm Symp}  (M_1,\Om_1))\otimes \Q) \ge 1$.

\noindent
b) We also have $rk ({\rm Symp} (M^4_2, \om_2))\ge 1.$
\end{theorem}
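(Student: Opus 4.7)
The strategy is to import Theorem \ref{Theorem 4.3}'s loop into $\pi_1({\rm Symp}(M_1,\Om_1))$ by taking product with ${\rm id}_N$, and detect its non-triviality via a product formula for parameterized Gromov--Witten invariants. Since $(X_1^4,\om_1)$ is non-monotone and diffeomorphic to $S^2\times S^2$, we may assume $(X_1,\om_1)=(S^2\times S^2,\om^{(1)}\oplus\om^{(2)})$ with $\om^{(1)}(A_1)>\om^{(2)}(A_2)$; Theorem \ref{Theorem 4.3} then furnishes a loop $g_0\in \pi_1({\rm Symp}(X_1,\om_1))$ whose associated $X_1$-bundle $E_0\to S^2$ satisfies $I^{E_0}_{0,0,s_{A_1-A_2}}=1$. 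The loop $\widetilde g_0:=g_0\times{\rm id}_N$ in $\pi_1({\rm Symp}(M_1,\Om_1))$ has associated bundle $E_0\times N\to S^2$.

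Equip $E_0\times N$ with the fiberwise product almost complex structure $\widetilde J=J(E_0)\oplus J_N$. Positivity of energy forces any vertical $\widetilde J$-stable map of class $(A_1-A_2,0)$ to split as a $J(E_0)$-vertical curve in $E_0$ of class $A_1-A_2$ (the unique $(-2)$-curve over $a=0$) together with a constant map to $N$, yielding a Kuranishi-compatible isomorphism
\[
C\Mm_{0,1,s_{(A_1-A_2,0)}}(E_0\times N,\widetilde J)\cong C\Mm_{0,1,s_{A_1-A_2}}(E_0,J(E_0))\times N
\]
with evaluation $((u,n),z_1)\mapsto(u(z_1),n)$. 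Insert $\widetilde\alpha:=\pi_{E_0}^*\widetilde\alpha_E\cup \pi_N^*{\rm PD}_N[{\rm pt}]\in H^{2k+2}(E_0\times N)$, where $\widetilde\alpha_E\in H^2(E_0)$ is a Leray--Hirsch lift of ${\rm PD}_{X_1}[A_1]\in H^2(X_1)$; such a lift exists because $\om^{(1)}(A_1)\neq\om^{(2)}(A_2)$ prevents any symplectomorphism of $(X_1,\om_1)$ from swapping the two $S^2$-factors, so the monodromy of $E_0$ acts trivially on $H^*(X_1)$. Fubini and the computation in the unique fiber then give
\[
I^{E_0\times N}_{0,1,s_{(A_1-A_2,0)}}(\widetilde\alpha)=I^{E_0}_{0,1,s_{A_1-A_2}}(\widetilde\alpha_E)={\rm PD}_{X_1}[A_1]\cdot(A_1-A_2)=-1.
\]
Proposition \ref{Prop 4.1} together with Remark \ref{Rem 4.1.1} shows this invariant descends to a homomorphism $\pi_1({\rm Symp}(M_1,\Om_1))\to\Q$, so non-vanishing on $\widetilde g_0$ yields ${\rm rk}(\pi_1\otimes\Q)\geq 1$.

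\textbf{Plan for (b) and the main obstacle.} I would imitate Theorem \ref{Theorem 4.3}'s argument using a family of rank-$2$ bundles $\{V'_a\}$ on $\C P^1$ whose generic fiber is $\Oo\oplus\Oo(1)$ (so that the projectivization is $F_1\cong X_2$) and whose central fiber at $a=0$ is $\Oo(-1)\oplus\Oo(2)$ (so that the projectivization is $F_3$). The unique $(-3)$-curve in $F_3$ represents a class $\sigma\in H_2(X_2)$ with $c_1(\sigma)=-1$; since the virtual dimension of the parameterized moduli is negative at $m=0$, one uses an $m=1$ insertion (Remark \ref{Rem 4.1.1}) to obtain a zero-dimensional virtual count. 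The chief obstacle, absent from Theorem \ref{Theorem 4.3}, is that the normal-bundle sequence $0\to \Oo(-3)\to N\to \Oo_D\to 0$ has $H^1(\Oo(-3))=\C^2$, so the connecting map from the one-dimensional $H^0(\Oo_D)$ cannot be surjective; one must either enlarge the family to a two-parameter deformation of $\Oo(-1)\oplus \Oo(2)$ (making the connecting map $\C^2\to \C^2$) and extract a loop from a circle in the two-dimensional parameter space, or else switch to a class such as the $(-1)$-curve $s_-+f\in H_2(F_3)$ with $c_1>0$, for which the transversality argument is direct. In either case one must choose the smooth identification of $\widetilde E$ near $\infty\in \C P^1$ so that the class pulls back to one with non-positive $\om_2$-area in the generic $F_1$-fiber, e.g.\ to $L-2E\in\Z L\oplus\Z E$, which satisfies $(L-2E)^2=-3$ but lies outside the Mori cone of $F_1$; this rules out extraneous pseudo-holomorphic contributions in generic fibers, leaving only the central-fiber curve and producing a non-zero parameterized GW invariant, hence a loop of infinite order in $\pi_1({\rm Symp}(X^4_2,\om_2))$.
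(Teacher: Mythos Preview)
For part (a), your approach coincides with the paper's: both construct the loop $g_t\times{\rm id}_N$, pass to the product bundle $E_0\times N\to S^2$ with product vertical almost complex structure, identify the vertical moduli in class $A_1-A_2$ with (the universal curve over) $\{(-2)\text{-curve}\}\times N$ using the transversality established in Theorem~\ref{Theorem 4.3}, and detect non-triviality by a one-pointed parameterized Gromov--Witten invariant constrained to a point in the $N$-factor. Your insertion carries an additional $H^2(X_1)$-factor compared with the paper's $PD[E'\times\{y_0\}]$, which makes the degree match the moduli dimension $2k+2$ on the nose; this is a cosmetic difference, and the invocation of Proposition~\ref{Prop 4.1} and Remark~\ref{Rem 4.1.1} is the same.

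For part (b), your approach is genuinely different from the paper's and, as written, is not a proof. The paper does \emph{not} compute a parameterized Gromov--Witten invariant on an $X_2^4$-bundle. Instead it argues in two steps. First, $\pi_1({\rm Symp}(\C P^2,\om,pt))\supset\Z$ via the inclusion $U(2)\hookrightarrow{\rm Symp}(\C P^2,\om,pt)$ (equivalently, via Gromov's ${\rm Symp}(\C P^2,\om)\simeq PU(3)$). Second, fiberwise blow-up along the tautological fixed-point section sends the corresponding $\C P^2$-bundle $E_a\to S^2$ to an $X_2^4$-bundle $\widetilde E_a$, and this assignment is injective on $\pi_1$: if $\widetilde E_a$ were trivial one could interpolate fiberwise compatible almost complex structures from a product structure to the blown-up one, avoid $(-k)$-curves with $k\geq 2$ by a codimension count, and then fiberwise blow down the unique $(-1)$-curves to trivialize $E_a$, a contradiction. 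No transversality computation for a $(-3)$-curve is ever needed.

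Your direct route through a one-parameter family $F_1\rightsquigarrow F_3$ runs into exactly the obstacle you identify, and neither repair you outline is actually carried out. For the two-parameter fix you would still need to specify the family, verify that the connecting map $H^0(\Oo_D)^{\oplus 2}\to H^1(\Oo(-3))$ is an isomorphism, explain why the circle you extract lifts to a loop in ${\rm Symp}(X_2^4,\om_2)$ rather than merely in ${\rm Diff}(X_2^4)$, and then compute the invariant. For the alternative class, note that in $F_3$ the class $s_-+f$ is represented only by the \emph{reducible} curve $s_-\cup f$ (irreducible curves in $F_n$ other than $s_-$ and $f$ lie in $|a\,s_+ + b\,f|$ with $a\geq 1$, $b\geq 0$), so you cannot invoke a smooth transversal moduli point; and you have not pinned down the identification $H_2(F_3)\cong H_2(F_1)$ induced by the family, without which the claim that your chosen class is non-effective in the generic fibers is unsupported. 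As it stands, part (b) is a programme rather than a proof; the paper's blow-up/blow-down argument sidesteps all of these difficulties.
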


\begin{proof}
a) To prove the  statement for $(M_1,\Om_1)$ we shall construct a symplectic fiber 
bundle $E$ with fiber $(M_1,\Om_1)$
over $S^2$ and compute  a parameterized Gromov-Witten
invariant of relative degree 0 and with one marked point of $E$.  
For  a symplectic fiber bundle $E$ with fiber $(M_1,\Om_1)$,  
we consider  the moduli space of vertical
holomorphic mappings $f: S^2 \to E$ whose 
image represents the class $A:=A_1 -A_2$. 
Note that the local system $\Hh_2(E)$ is 
simple for any symplectic fiber bundle 
over the base space $S^2$.  

The dimension computation shows us that the moduli space of
vertical stable maps of genus $0$ in the class $A$ on 
an $(M,\om)$-bundle over $S^2$ has dimension $2k= \dim N^{2k}$.  
We will compare symplectic fiber bundles over $S^2$, 
each of which is the product of a symplectic fiber bundle $E'$ with 
the fiber $X_1^4$ over $S^2$ and $N=N^{2k}$, i.e., 
$E=E' \times N \to S^2$.  
We shall count the number of vertical  
pseudo-holomorphic spheres $u$ with one marked point $z$ 
in the class $A$ so that $ev(u;z) \in \Gamma$.   
Here $\Gamma$ is a cycle represented by 
a submanifold $E' \times \{ y_0 \} \subset E$, for some 
arbitrary chosen point $ y_0 \in N^{2k}$. 
This number is the parameterized 
Gromov-Witten invariant $I^E _{0,1, A_1-A_2} (PD [\Gamma])$.  

Note that the restriction of $PD [\Gamma] \in H^*(E)$ 
to $E|_b$, $b \in B$, is $PD [X_1^4 \times \{y_0 \}] 
\in H^*(X_1^4 \times N)$.  But this condition does not 
characterize $PD [\Gamma] \in H^*(E)$.  Let $\Gamma' \in E$ be 
another cycle such that the restriction of $PD [\Gamma']$ to 
$E|_b$ is equal to $PD [X_1^4 \times \{y_0 \}] \in H^*(X_1^4 \times N)$. 
Since the base space $B$ is $S^2$, we find that 
$PD [\Gamma'] - PD [\Gamma] \in H^2(B) \otimes H^*(X_1^4 \times N)$.  
In other words, $[\Gamma'] -[\Gamma]$ is represented by some cycle 
contained in a single fiber $E|_{b_0}$.  
By dimensional counting argument, we can take a fiberwise almost complex 
structure such that there are no pseudo-holomorphic $(A_1 - A_2)$-spheres 
contained in $E|_{b_0}$. Thus we have 
$I^E _{0,1, A_1-A_2} (PD [\Gamma])=I^E _{0,1, A_1-A_2} (PD [\Gamma'])$.  
Hence $I^E _{0,1, A_1-A_2} (PD [\Gamma])$ gives an invariant 
for symplectic $X_1^4 \times N$-bundles over $S^2$.  

We claim that this number of the 
the trivial bundle $S^2\times (M_1,\Om_1)$ equals zero.  
To show it we consider 
a vertical almost complex structure  $J^{\rm prod}$ on the trivial bundle 
$(M,\om)\times S^2$ such that 
on each fiber $(M,\om)$ we have $J^{\rm prod}= J^0 \times J^N$, where 
$J^0$ is the standard product complex structure on $S^2\times S^2$ and 
$J^N$ is an almost complex structure on $(N^{2k},\om_0)$.  
Clearly the projection on the first factor of any $J^{\rm prod}$-sphere 
is also a $J^0$-sphere in $S^2\times S^2$.  
Hence the moduli space of $J^{\rm prod}$-spheres in class $A_1-A_2$ 
is empty. 

Now we  construct a symplectic $(M_1,\Om_1)$-bundle  $E$ over
$S^2$ by gluing two trivial symplectic $(M_1,\Om_1)$-bundles, 
one over a disk $B^2$ 
and the other over a disk $D^2$, using the loop $\tilde g_t :S^1\to 
{\rm Symp}(M_1,\Om_1)$ of the form $\tilde g_t = (g_t\times \{ Id\}) \in ({\rm Symp} (X_1^4 , \om_1)\times \{ Id\})
\subset {\rm Symp} (M_1, \Om_1)$.  
Here $g_t$ is the transition function for $\Xx \to \C P^1$.  
(See also Appendix.)  

To compute the Gromov-Witten
invariant $I^E_{0,1,A_1-A_2}(PD[\Gamma])$,
we  construct a fiberwise 
compatible almost complex structure $J(E)$ by gluing two 
fiberwise compatible almost complex structures on 
the restriction of $E$ to  disks $D^2_0$ and $D^2_1$.  
The first fiberwise compatible almost complex structure is defined 
as follows: $J(z)= (J^0  \times J^N)$ for $z\in D^2_0$. 
The second one is defined as follows: $J(a)=(J_a \times  J^N)$, 
$a \in D^2_1 \subset \C$.  
They are glued by using the symplectomorphism loop $\tilde g_t$.  

We claim that the constructed vertical 
almost complex is $A=A_1-A_2$-generic.  
Clearly outside the  singular point  $z =0$ 
in the disk $D^2_1$ of the base $S^2$, 
where the vertical almost complex structure take 
value $(J^{F_2} \times J^N)$, the moduli space of $J(E)$-holomorphic 
spheres realizing $A$ is empty. At the singular point $a = 0$ 
the moduli space is diffeomorphic to $N^{2k}$, namely it consists 
of maps $u_y(t)= \{ u_1(t)\times y\}, y\in N^{2k}$, where $t\in S^2$ 
and $u_1$ is the $J^{F_2}$-holomorphic $(-2)$-sphere in $X^4_1$.  
Clearly the transversality of the constructed $J(E)$ is equivalent 
to the surjectivity of the linearization map 
$D_u\bar{\p}_{J(E)}: T_{\pi (u)} S^2 \times L^p_1 (u^* T_{ver}E) 
\to L^p (\Lambda ^{0,1} S^2\otimes _{J(E)} u^* T_{ver}E)$.  
Since  $u= (u_1,y)$, 
we have $u^* T_{ver}E = (u_1 ^*TX^4_1 )\times T_y N^{2k}$, 
so the surjectivity of $D\bar {\p}_{J(E)}$
follows from the surjectivity of the linearization map considered in 
the proof of Theorem \ref{Theorem 4.3}, see also the proof of Lemma 5.1 in Appendix.  
This proves the first statement in Theorem \ref{Theorem 4.4} for the case 
$(M_1,\Om_1)$.

Now let us prove the statement b).
Denote by ${\rm Symp}  (\C P^2, \om, pt)$ the subgroup of the symplectomorphisms
of $(\C P^2,\om)$ which preserve a point $pt$.
Clearly Theorem \ref{Theorem 4.4}.b follows from the following Lemmas 4.5, 4.6.
\end{proof} 

\begin{lemma}\label{Lem 4.6}
The  fundamental group of ${\rm Symp}(\C P^2, \om, pt)$ contains
a subgroup $\Z$. 
\end{lemma}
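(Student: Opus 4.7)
The plan is to combine the evaluation-at-$pt$ fibration with Gromov's computation of the homotopy type of ${\rm Symp}(\C P^2,\om)$. First I would write down the locally trivial fibration
$$
{\rm Symp}(\C P^2,\om,pt)\INTO {\rm Symp}(\C P^2,\om)\stackrel{ev}{\to}\C P^2,\quad \phi\mapsto \phi(pt).
$$
Local triviality holds because the subgroup $PU(3)\subset {\rm Symp}(\C P^2,\om)$ already acts transitively on $\C P^2$, so a local slice for the $PU(3)$-action at $pt$ supplies a local section of $ev$ in a neighborhood of each point.

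Next I would invoke Gromov's theorem \cite{Gromov1985} that the inclusion $PU(3)\INTO {\rm Symp}(\C P^2,\om)$ is a weak homotopy equivalence. This gives
$$
\pi_1({\rm Symp}(\C P^2,\om))\cong \pi_1(PU(3))=\Z/3,\qquad \pi_2({\rm Symp}(\C P^2,\om))\cong \pi_2(PU(3))=0,
$$
where the second identification uses the classical fact that $\pi_2$ of a finite-dimensional Lie group vanishes.

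Feeding this into the long exact homotopy sequence of the fibration, together with $\pi_1(\C P^2)=0$ and $\pi_2(\C P^2)=\Z$, one obtains the short exact sequence
$$
0\lra \Z\lra \pi_1({\rm Symp}(\C P^2,\om,pt))\lra \Z/3\lra 0,
$$
which exhibits $\Z$ as a subgroup of $\pi_1({\rm Symp}(\C P^2,\om,pt))$, proving the lemma. There is no real obstacle: the only nontrivial input is Gromov's computation of $\pi_*({\rm Symp}(\C P^2,\om))$, which is cited, and the only other point to verify, local triviality of $ev$, is a standard consequence of the existence of local slices for the $PU(3)$-action on the homogeneous space $\C P^2=PU(3)/U(2)$.
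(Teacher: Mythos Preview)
Your argument is correct and coincides with the paper's first proof: the paper also derives the lemma from Gromov's identification ${\rm Symp}(\C P^2,\om)\simeq PU(3)$ together with the fibration ${\rm Symp}(\C P^2,\om,pt)\to {\rm Symp}(\C P^2,\om)\to \C P^2$. The paper additionally offers a second, more elementary argument that avoids the full strength of Gromov's theorem: one checks that the inclusion $U(2)\hookrightarrow {\rm Symp}(\C P^2,\om,pt)$ is injective on $\pi_1$ by composing with the derivative-at-$pt$ map $ev:{\rm Symp}(\C P^2,\om,pt)\to Sp(4)$, $g\mapsto Dg(pt)$, and observing that $ev|_{U(2)}$ is injective with image a deformation retract of $Sp(4)$.
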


\begin{lemma}
There is an injective homomorphism $\alpha$ from the infinite 
cyclic subgroup of $\pi_1({\rm Symp} (\C P^2, \om, pt))$ in Lemma \ref{Lem 4.6} 
to $\pi_1({\rm Symp}  (X^4_2,\om_2))$. 
\end{lemma}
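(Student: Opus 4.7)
The plan is to realise $\alpha$ as the map induced on $\pi_1$ by the family symplectic blow-up at $pt$, and then to detect the non-triviality of the image of the $\Z$-generator by computing a parameterized Gromov--Witten invariant of the resulting $X^4_2$-bundle over $S^2$, in the spirit of Theorems \ref{Theorem 4.3} and \ref{Theorem 4.4}.

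\emph{Step 1 (construction of $\alpha$).} Given a loop $\{\phi_t\}_{t\in S^1}$ in ${\rm Symp}(\C P^2,\om,pt)$, Remark 2.3 produces a symplectic $\C P^2$-bundle $P\to S^2$ together with a distinguished section $\sigma:S^2\to P$ given by the fibrewise marked point. The family symplectic blow-up of $P$ along $\sigma$ produces a symplectic $X^4_2$-bundle over $S^2$, and this construction is functorial in loops: composition corresponds to the fibre connected sum of the resulting bundles. Passing through the classifying spaces $B{\rm Symp}(\C P^2,\om,pt)$ and $B{\rm Symp}(X^4_2,\om_2)$ yields a group homomorphism
$$\alpha: \pi_1({\rm Symp}(\C P^2,\om,pt)) \to \pi_1({\rm Symp}(X^4_2,\om_2)).$$

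\emph{Step 2 (holomorphic model).} The $\Z$ of Lemma \ref{Lem 4.6} arises from the orbit fibration ${\rm Symp}(\C P^2,\om,pt)\to{\rm Symp}(\C P^2,\om)\simeq PU(3)\to\C P^2$, combined with $\pi_1(U(2))=\Z$ and $\pi_2(PU(3))=0$. Up to finite index, the generator is represented by a circle $\{g_t\}\subset PU(3)$ fixing $pt=[1:0:0]$, for instance $g_t=[\mathrm{diag}(1,e^{it},1)]$, whose linearisation at $pt$ is a generator of $\pi_1(U(2))$. The associated $\C P^2$-bundle $P\to S^2$ is then the projectivisation of a rank-$3$ holomorphic bundle on $\C P^1$ of the form $\Oo\oplus\Oo(-1)\oplus\Oo$, with $\sigma$ given by the first summand; blowing up along $\sigma$ yields an integrable K\"ahler $X^4_2$-family $\Ee\to S^2$ which is a geometric model for $\alpha$ of the generator.

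\emph{Step 3 (non-triviality via parameterized GW).} Following the template of Theorem \ref{Theorem 4.4}.a, I would choose a class $A\in H_2(X^4_2;\Z)$ and a number of marked points $m$ so that $I^\Ee_{0,m,s_A}$ has relative degree $0$; a natural choice is $A=L-\ell$ (the proper transform of a line through the blown-up point) with an appropriate $m$ and input cycle $\Gamma$ adapted to the exceptional divisor, playing the role of $E'\times\{y_0\}$ in the proof of Theorem \ref{Theorem 4.4}.a. For the trivial $X^4_2$-bundle, a product almost complex structure forces the invariant to vanish, as in Proposition 3.5 and the opening computation of the proof of Theorem \ref{Theorem 4.3}. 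For $\Ee$, the holomorphic description of Step 2 gives an explicitly computable moduli space of vertical $(L-\ell)$-spheres, with a unique rigid curve appearing over the special parameter value; transversality reduces to a cohomology vanishing $H^1(\C P^1;\Oo(N))=0$ for its normal bundle $N$ in the total space of $\Ee$, in direct analogy with the lemma in the proof of Theorem \ref{Theorem 4.3}. A non-zero answer then distinguishes $\Ee$ from the trivial bundle via Theorem \ref{Theorem 3.3}, so $\alpha$ must send the $\Z$-generator to a non-zero class in $\pi_1({\rm Symp}(X^4_2,\om_2))$.

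The hardest step is the Gromov--Witten computation of Step 3: identifying the unique rigid $(L-\ell)$-curve in $\Ee$ and verifying its transversality via cohomology vanishing on $\C P^1$. Once these ingredients are in place, Proposition 3.7 and Remark 3.8 ensure that parameterized GW invariants of relative degree $0$ serve as the injectivity obstruction, and the functoriality of $\alpha$ under fibre connected sum (Step 1) forces every non-zero multiple of the generator to survive in $\pi_1({\rm Symp}(X^4_2,\om_2))$.
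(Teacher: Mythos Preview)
Your Step~1 (fibrewise blow-up along the marked section) matches the paper's construction of $\alpha$. The divergence is in the injectivity argument. The paper does \emph{not} compute any parameterized Gromov--Witten invariant of an $X_2^4$-bundle. Instead it argues by contradiction via \emph{blow-down}: if $\widetilde{E}_a$ were a trivial symplectic bundle, one takes a path of fibrewise compatible almost complex structures joining the product structure to the one coming from blow-up; after a small perturbation (using that the locus of structures admitting $(-k)$-curves, $k\ge 2$, has codimension $2(k-1)$ and that $\C P^2\#\overline{\C P^2}$ carries no $(-2)$-class) every fibre contains a unique embedded $(-1)$-sphere; blowing these down fibrewise reconstructs $E_a$ up to isotopy, forcing $E_a$ to be trivial and contradicting Lemma~\ref{Lem 4.6}. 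Thus the paper transfers the question back to $\C P^2$-bundles, where the non-triviality was already established by an elementary evaluation map to $Sp(4)$.

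Your Step~3 has a genuine gap. The class $A=L-E$ you propose is the fibre class of $F_1\to\C P^1$ and satisfies $\langle c_1,L-E\rangle=2$, so it is represented by $J$-holomorphic spheres for \emph{every} compatible $J$ on $X_2^4$. Consequently the ordinary invariant $I^{X_2^4}_{0,m,L-E}$ does not vanish (e.g.\ with one point constraint it equals~$1$), and by Proposition~3.5 the parameterized invariant for the \emph{trivial} bundle does not vanish either. This is exactly opposite to the $S^2\times S^2$ situation, where $A_1-A_2$ has $c_1=0$ and no curves for the split $J$. Worse, the only classes in $H_2(X_2^4)$ with $c_1=0$ are multiples of $L-3E$, and an irreducible sphere in $L-3E$ would have virtual genus $1+\tfrac12((L-3E)^2-0)=-3<0$, so no simple rigid curve of the type you need can appear over a $2$-dimensional base. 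The paper's blow-down argument sidesteps this obstruction entirely; if you want to salvage a direct GW approach you would have to invoke something like the Seidel element rather than a naive curve count.
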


Lemma \ref{Lem 4.6} is a direct consequence of Gromov's theorem, 
which states that ${\rm Symp} (\C P^2, \om)$ is homotopy equivalent to 
$PU(3)$, since the quotient space 
${\rm Symp} (\C P^2, \om)/{\rm Symp} (\C P^2, \om, pt)$ is 
isomorphic to $\C P^2$.  We can also see this fact as follows.  
It suffices to show that the inclusion $U(2)\to {\rm Symp} (\C P^2, \om, pt)$
induces an injective homomorphism on the corresponding fundamental groups.
To see it we consider the evaluation map $ev: {\rm Symp} (\C P^2, \om, pt) \to Sp(4): \: g \mapsto
Dg(pt, v)$, where $v$ is an element in the frame $Sp(4)$ over the fixed
point in $\C P^2$. The restriction of this evaluation map to $U(2)$ is injective, and we know that the image $ev(U(2))$ is a deformation retract of $Sp(4)$. Hence follows the Lemma.

\begin{proof}[Proof of Lemma 4.7] 
Denote by $E_a$ the symplectic fiber bundle
over $S^2$ with fiber $(\C P^2, \om ,pt)$ corresponding to element
$a\in \Z = \pi_1 (U(2)) \subset \pi_1({\rm Symp} (\C P^2, \om, pt))$.  
Using the spectral sequence for $E_a$ 
we see that there is a closed $2$-form  $\Om$ such that 
 the restriction of $\Om$ to the fiber $\C P^2$ equals $\om$.   
Thus we can apply the Thurston construction (see e.g. \cite{McD-S1995}, p. 193) 
to conclude that 
$E_a$ is a symplectic manifold with a symplectic form 
$\Om_K$ in a class $K\pi^*(\om_0) + \Om$, where $\om_0$ is a symplectic form on the base $S^2$.  
Moreover, all the fibers $\C P^2 $ are symplectic submanifolds of $(E_a, \Om_K)$.  
Let $s$ be a section $S^2\to E_a$  such that $s(S^2)$ is a symplectic 
submanifold of $(E_a,\Om_K)$.   
Next we construct a new symplectic fiber bundle over $S^2$ by fiber-wise 
blowing-up a symplectic fiber bundle with the fiber $(\C P^2, \om)$ at point 
$s(x), x\in S^2$.  
That is exactly the blow-up $(E_a, \Om_K)$ along the submanifold  $s(S^2)$.  
Denote by $\widetilde{E}_a, \tilde{\Om_K}$ the result 
of blowing-up.  
Clearly the fiber of this new fiber bundle is $\C P^2\# \overline{\C P^2}$.  
Now we apply Lemma \ref{Lemma 2.2} to conclude that $\widetilde{E}_a$ is a symplectic 
fiber bundle with the fiber $(X^4_2, \om_2)$. Thus we have constructed a map 
$\alpha : \Z \subset \pi_1({\rm Symp} (\C P^2, \om)) \to \pi_1({\rm Symp} (X^4_2, \om_2))$.
Because our
construction is compatible with the operation of fiber-connected sum we
conclude that $\alpha$ is a homomorphism. 

Now assume that $\alpha $ is not injective.
Then, for some integer $a \neq 0$, 
we can find a trivialization of symplectic fiber bundle $\widetilde{E}_a$ with 
a constant compatible complex structure $J_{\widetilde{E}_a}^0$.   
We denote 
by $\{ J_{\widetilde{E}_a}^t \}_{0 \leq t \leq 1}$ 
a family of compatible almost complex structures
on $\widetilde{E}_a$ with $J_{\widetilde{E}_a}^1$ 
being the almost complex structure 
resulting from the blow-up process along $s(S^2)$.  
Note that the space of compatible almost complex structures, 
which admit $(-k)$-curve, is of real codimension $2(k-1)$.  
(See Appendix for the proof in the case that $k=2$.  
The argument can be generalized for general $k > 2$.) 
Note also that $\C P^2 \# \overline{\C P^2}$ does not 
contain any cycle of self-intersection number $-2$.   
In our case, $\{ J_{\widetilde{E}_a}^t \}$ gives a three parameter 
family of compatible 
almost complex structures on $\C P^2 \# \overline{\C P^2}$.  
Thus we can take $\{J_{\widetilde{E}_a}^t\}_{0 \leq t \leq 1}$ 
such that there are no $(-k)$-curves, $k \geq 2$,  
in the fibers of $(\widetilde{E}_a,J_{\widetilde{E}_a}^t)$ over $\C P^1$.  
Then, for each $t \in [0,1]$ and $b \in S^2$, there is a 
unique $(-1)$ embedded curve in each fiber 
$(\widetilde{E}_a\vert_b \cong X^4_2, \om, J^t)$.  
Then the process of blowing-down of all 
$J_{\widetilde{E}_a}^t$-holomorphic 
$(A_1 - A_2)$-spheres in the fibers of 
$\widetilde{E}_a$  is isotopic each other as a fiberwise compatible almost 
complex structures as $J^t_{\widetilde{E}_a}$ vary. 
Hence follows that our symplectic fiber bundle 
$(E_a,\Om_K)$ is also trivial, which is a contradiction. \QED
\end{proof}

We shall improve Theorem \ref{Theorem 4.4} in the following statement\footnote{
This proof was suggested by Professor A. Kono, when K.O. gave 
a  proof of this result at his seminar in 1996.}.  

\begin{theorem}\label{Theorem 4.7}
a) The rank of the homomorphism 
$\pi_1 ({\rm Symp}  (M_1,\Om_1)) \to \pi_1({\rm Diff} (M_1))$ 
is at least 1.

b) The rank of the homomorphism $\pi_3 ({\rm Symp} (M_1, \Om_1)) 
\to \pi_3({\rm Diff}(M_1)$
is at least 2.
\end{theorem}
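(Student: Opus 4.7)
The plan is to detect nontrivial classes in $\pi_*(\text{Symp}(M_1,\Om_1))$ whose images in $\pi_*(\text{Diff}(M_1))$ survive, by pulling back smooth characteristic classes in $H^*(B\text{Diff}(M_1);\Q)$ along finite-dimensional Lie-group homomorphisms into Symp.

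For part (b), the key observation is that $SO(3)\times SO(3)$ acts symplectically on $X_1^4 = S^2\times S^2$ by rotations of the two factors, preserving any split form $a\om_0\oplus b\om_0$. Extending by the identity on $N^{2k}$ yields a homomorphism $\varphi : SO(3)\times SO(3) \to \text{Symp}(M_1, \Om_1)$; set $\psi = i\circ \varphi$ where $i$ is the forgetful inclusion. Since $\pi_3(SO(3)\times SO(3)) \otimes \Q \cong \Q\oplus \Q$, it suffices to show that the induced map $(B\psi)^* : H^4(B\text{Diff}(M_1); \Q) \to H^4(B(SO(3)\times SO(3)); \Q)$ has image of rank two in the degree-$4$ part of $\Q[p'_1, p''_1]$. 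Generalized Mumford--Morita--Miller classes provide the desired classes: for the universal $M_1$-bundle $E\to B\text{Diff}(M_1)$, fiber-integrate $p_1(T_v E)\cdot \alpha$ for fiberwise cohomology classes $\alpha \in H^*(E;\Q)$ dual to the cycles $\{\text{pt}\}\times S^2\times N$ and $S^2\times \{\text{pt}\}\times N$. Pulled back along $B\psi$, the first resulting class involves only $p'_1$ and the second only $p''_1$, because the $i$-th $SO(3)$-factor acts non-trivially only on the $i$-th $S^2$. Linear independence in $\Q[p'_1, p''_1]$ gives the rank-two conclusion.

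For part (a), use the loop $g_t \in \text{Symp}(M_1,\Om_1)$ of Theorem \ref{Theorem 4.4}, which classifies a smooth $M_1$-bundle $E\to S^2$. To detect $[g_t]$ in $\pi_1(\text{Diff}(M_1))$, we pair a fiber-integral of a polynomial in Pontryagin classes of $T_vE$ against a fiberwise class chosen Poincar\'e-dual to the $(-2)$-curve cycle underlying the parameterized Gromov--Witten calculation of Theorem \ref{Theorem 4.4}. The explicit Hirzebruch-family realization of $g_t$ reduces the computation to a finite one in $H^*(F_2)$, producing a nonzero value, whereas the analogous integral for the trivial bundle $S^2\times M_1$ vanishes by the K\"unneth formula, proving that $g_t$ has infinite order in $\pi_1(\text{Diff}(M_1))\otimes \Q$.

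The main obstacle is the explicit Pontryagin-class computation in part (b): one must ensure that the pull-backs via the two $SO(3)$-factors really produce $\Q$-linearly independent classes. This reduces, via Leray--Hirsch for the universal $S^2$-bundle over $BSO(3)$, to the standard identification of $p_1^{SO(3)}$ of the rank-three standard representation and a clean formula for $p_1(T_v)$ of this $S^2$-bundle in terms of $p_1^{SO(3)}$. Once this identification and the projection formula for fiber integration are in place, the two pull-backs land in different one-dimensional summands of $H^4(B(SO(3)\times SO(3));\Q)$, and linear independence follows immediately. In part (a), the technical subtlety lies in choosing the right Pontryagin monomial-times-fiberwise-class whose fiber integral over $E \to S^2$ is nonzero for dimensional reasons, given that $\dim M_1 = 4+2k$ can be large.
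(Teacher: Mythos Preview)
Your strategy---detect the images in $\pi_*(\mathrm{Diff}(M_1))$ by cohomological invariants of the clutched bundles---is reasonable, but the specific classes you write down in part (b) cannot do the job, and part (a) is left as a sketch whose missing computation is not routine.

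\medskip

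\textbf{Part (b): the degrees do not match.}  The cycle $\{pt\}\times S^2\times N$ has codimension~$2$ in $M_1$, so its Poincar\'e dual $\alpha$ lies in $H^2$. With $\dim M_1=4+2k$, the fiber integral $\pi_!\bigl(p_1(T_vE)\cdot\alpha\bigr)$ lands in $H^{6-(4+2k)}(B)=H^{2-2k}(B)$, which is never degree~$4$ (and is zero for $k\ge 2$). So neither of your two classes lies in $H^4(BSO(3)\times BSO(3);\Q)$, and no linear-independence statement there can follow. A secondary issue is that the fiberwise classes $\alpha$ are not \emph{a priori} defined on the universal bundle over $B\mathrm{Diff}(M_1)$ (they are not invariant under the full diffeomorphism group); one can argue they extend over $S^4$, but this has to be said and is not what you claim. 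The paper avoids all of this by a much lighter argument: for the two $M_1$-bundles $E_1,E_2\to S^4$ clutched by the generators of $\pi_3(SO(3)\times\{Id\})$ and $\pi_3(\{Id\}\times SO(3))$, one reads off the connecting homomorphisms $h_i:\pi_4(S^4)\to\pi_3(M_1)\cong\Z\oplus\Z\oplus\pi_3(N)$ in the homotopy long exact sequence. Since the orbit map $SO(3)\to S^2$ induces an isomorphism on $\pi_3$, $h_i$ hits the $i$-th $\Z$-summand nontrivially; hence the $h_i$ are independent and no nonzero combination of the clutching maps can bound a trivial smooth bundle.

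\medskip

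\textbf{Part (a): the key computation is not supplied.}  You correctly identify the bundle to test, but you do not name a Pontryagin monomial and a fiberwise class whose fiber integral over $E\to S^2$ is nonzero; note that $p_1(T(S^2\times S^2))=0$ (indeed $c_1^2-2c_2=8-8=0$), so $p_1(T_vE)$ restricts trivially to every fiber and the information you need is genuinely secondary. The paper's route is more direct and avoids Pontryagin classes: regard $E$ as the projectivization of the rank-$2$ bundle $\mathcal V$ over $\C P^1\times\C P^1$ with $c_1(\mathcal V)=0$ and $c_2(\mathcal V)$ the generator, and apply Leray--Hirsch to get
\[
H^*(E;\Z)\;\cong\;H^*(S^2\times S^2;\Z)[t]\big/\bigl(t^2-\{S^2\times S^2\}\bigr),
\qquad
H^*(E^{\langle n\rangle};\Z)\;\cong\;H^*(S^2\times S^2;\Z)[t]\big/\bigl(t^2-n\{S^2\times S^2\}\bigr).
\]
In each of these rings the set $\{x\in H^2:x^2=0\}$ is the union of two rank-$1$ subgroups, whereas in $H^*((S^2)^3;\Z)$ there are three linearly independent degree-$2$ classes with vanishing square. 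Hence $E^{\langle n\rangle}$ is not even homotopy equivalent to $(S^2)^3$ for any $n\neq 0$, so $(g_t)^n$ is nontrivial in $\pi_1(\mathrm{Diff}(M_1))$ for all $n$ (the factor $N$ is handled by K\"unneth). This gives the rank-$1$ statement without any characteristic-number calculation.
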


\begin{proof}
a) It is enough to show that the symplectic fiber bundle 
constructed by the loop $(g_t)^k$, $k \neq 0$, in the proof of Theorem \ref{Theorem 4.4}.a 
is a non-trivial
differentiable fibration for all $k$.  
To do so, it suffices to
compute the cohomology ring of this differentiable bundle.  
Since the loop $g_t$ by our choice is the product of the identity element 
in ${\rm Symp}  (N, \om_0)$, our cohomology is also the tensor product 
of the corresponding rings.  
In short we need only to prove the non-triviality of the cohomology ring in the case 
$N = pt$.  

Here, we consider the projectivization $E$ of the bundle $\Vv$ over $\C P^1 \times \C P^1$, which is given in the proof of Theorem \ref{Theorem 4.3}.  
We compute the cohomology ring of $E$.  
We regard $E$ as a family of $\C P^1$-bundles on $\C P^1$ parametrized by 
$\C P^1$, namely a $\C P^1$-bundle over $S^2 \times S^2$.

Let us compute $H^*(E,\Z)$ by using the Leray-Hirsch Theorem.
To see that the first Chern class $c_1(\Vv )$ vanishes, 
it suffices to compute its restriction to $S^2\times \{ pt\}$
and $\{ pt\}\times S^2$.  
The second Chern class $c_2(\Vv )$ equals the Poincare dual of the class of 
the zero section of $\sigma_1$, which extends to a section on $\C P^1 \times \C P^1$.  
This zero locus consists of the only point $(z=\infty, a=0)$.  
So we find that $c_2(\Vv )$ is the generator $ \{ S^2\times  S^2\}$ 
of $H^4(S^2 \times S^2;\Z)$.  
Applying the Leray-Hirsch-Theorem we get 
$$ H^*(E, \Z) = \frac{H^*(S^2\times S^2, \Z)[t]}{t^2-\{ S^2\times S^2\}}.$$
In this ring $t^2$ cannot be devided by 2. But in the ring $H^*(S^2\times S^2\times S^2,\Z)$ any element
$t^2$ can be divided by 2. Hence $E$ is not homotopic to $S^2\times S^2\times S^2$.
That proves the non-triviality of the loop $\{ g_t\}$ in the diffeomorphism group.

To prove that the loop $\{ g_t^n\}$ also realizes a non-trivial element in the diffeomorphism group
we proceed similarly. Namely the loop $\{ g_t^n\}$ corresponds to the $n$-time 
fiber connected sum $E^{\langle n \rangle}$.  
We have the following formula
$$H^*(E^{\langle n \rangle}, \Z) = 
\frac{H^*(S^2\times S^2,\Z)[t]}{t^2- n \{ S^2\times S^2\} }.$$
Now we note that the set of element $x\in H^*(E^{\langle n \rangle}, \Z)$ 
such that $x^2=0$ is the union $\Z (\{S^2\}\times 1) \cup \Z (1\times \{S^2\})$.  
In particular from any 3 elements in this set we can 
get 2 linearly dependent elements.  
This implies that $E^{\langle n \rangle}$ and $S^2\times S^2\times S^2$ 
not homotopic, because there are $3$ linearly independent elements of the last ring,  
namely 
the generators $\{ S^2\}\times 1\times 1$, $1\times \{ S^2\}\times 1$, 
$1\times 1\times \{ S^2\} $, whose square vanish.

b)  It suffices to show that the two subgroups $SO(3) \times Id 
\subset {\rm Symp} (M_1,\om)$ 
and $Id \times SO(3) \subset {\rm Symp}(M_1,\om)$ realize two linearly independent
elements in $\pi_3({\rm Diff}(M_1))$.   
We denote by $E_1$ and $E_2$ two differentiable bundles with fiber $M_1$ over $S^4$, 
which correspond to the elements in $\pi_3({\rm Diff}(M_1))$ realized 
by these subgroups $SO(3)$.   
Let us consider the homotopy exact sequences of these fibration $E_i$, 
which give us two connecting homomorphisms $h_i: \pi_4(S^4)\to \pi_3(M_1)$.  
We observe that $\pi_4(S^4)= \Z$, $\pi_3(M_1) = \Z \oplus \Z \oplus \pi_3(N)$.  
Now it is easy to check that the homomorphism $h_i$ are linearly independent, 
and hence the image of the two subgroups are also linearly independent
elements in $\pi_3(E)$.   
\end{proof}

We can also describe parametrized Gromov-Witten invariants 
in a different way using the Poincar\'e duality.  
Define 

%\begin{equation}
\begin{align}
CGW^E_{g,m,s_A}:& H^*(C\Mm_{g,m} , \R)\to H^{*+\nu}(E^{(m)}, \R) 
\nonumber\\
& \delta \mapsto PD(\delta\backslash \Pi_* (C\Mm_{g,m,s_A}(E, J(E))).  
\end{align}
%\end{equation}
Here 
$\nu = m \dim M - 2 \langle c_1(M), A \rangle + (\dim M -6) (g-1) -2m$.  

Assume that the local system $\Hh_2(E)$ for the symplectic fiber bundle 
$p:E \to B$ is trivial.  
Using the moduli space with $m=0$, we define the following homomorphism 

%\begin{equation}
\begin{align}
CGW_{g,A}(E) :& H^*(C\Mm_{g,m},\R)\to H^{*+\nu}(B,\R) \nonumber \\
& \delta \mapsto PD (p_*(\delta \backslash \Pi_* (C\Mm_{g,0,s_A}(E, J(E)))). \nonumber
\end{align}
%\end{equation}

We call the image of the map $CGW_{g,A}(E)$ 
the Gromov-Witten characteristic classes.  

\begin{theorem}
The Gromov-Witten characteristic classes 
are invariants of symplectic fiber bundles $E$ with simple local system 
$\Hh_2(E)$.  
All the Gromov-Witten characteristic classes with positive degree 
vanish for trivial symplectic fiber bundles. 
\end{theorem}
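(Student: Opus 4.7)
The plan is to deduce both assertions from the parametrized Kuranishi structure of Lemma \ref{Lemma 3.1}, together with the product computation that underlies Proposition 3.5.

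For the invariance claim I would mimic the proof of Theorem \ref{Theorem 3.3}. Given two fiberwise compatible almost complex structures $J_0(E), J_1(E)$, I would consider the pulled-back symplectic fiber bundle $\widetilde{E} = E \times [0,1] \to B \times [0,1]$ and interpolate between them by a path $\{J_t(E)\}_{t\in [0,1]}$. Applying Lemma \ref{Lemma 3.1} to the compact oriented base $B \times [0,1]$ produces an oriented Kuranishi structure on $C\Mm_{g,0,s_A}(\widetilde{E},\{J_t(E)\})$ together with a strongly continuous map $\widetilde{\Pi}$ to $C\Mm_{g,0} \times (B \times [0,1])$ that restricts to the corresponding $\Pi_i$ over $t=0,1$. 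After choosing compatible transversal multi-section perturbations (extending those used at the endpoints, exactly as in \cite{F-O1999}, \S 6), the boundary of the resulting virtual fundamental cycle gives a bordism in $H_*(C\Mm_{g,0} \times B, \Q)$ between $\Pi_{0*}[C\Mm_{g,0,s_A}(E,J_0(E))]$ and $\Pi_{1*}[C\Mm_{g,0,s_A}(E,J_1(E))]$. Since the slant product with $\delta$ and the pushforward $p_*$ are continuous operations on singular homology, they descend to this bordism, so $CGW_{g,A}(E)$ is well defined. The same argument applied to an isomorphism $\Phi : E \to E'$ of symplectic fiber bundles (transporting $J(E')$ and $\widetilde{\Pi}$ by $\Phi$) shows that $CGW_{g,A}(E)$ depends only on the isomorphism class of $E$.

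For the vanishing claim over a trivial bundle $E = B \times M$ I would follow the strategy of Proposition 3.5. Choose a vertical compatible almost complex structure $J(E)$ that is constant in the $B$-direction, so that $J(E)|_{E_b} = J$ for a fixed $\om$-compatible $J$ on $M$, and pull back a multi-section perturbation from $C\Mm_{g,0}(M,J,A)$. Then
\begin{equation*}
C\Mm_{g,0,s_A}(E,J(E)) = B \times C\Mm_{g,0}(M,J,A),
\end{equation*}
and the map $\Pi$ factors as $\Pi^{pt} \times {\rm pr}_B$, where $\Pi^{pt}$ is the stabilization map for the unparametrized moduli space. By the K\"unneth formula,
\begin{equation*}
\Pi_*[C\Mm_{g,0,s_A}(E,J(E))] = \Pi^{pt}_*[C\Mm_{g,0}(M,J,A)] \times [B] \in H_*(C\Mm_{g,0} \times B, \Q).
\end{equation*}
For $\delta \in H^k(C\Mm_{g,0}, \R)$, the slant product gives
\begin{equation*}
\delta \backslash \bigl( \Pi^{pt}_*[C\Mm_{g,0}(M,J,A)] \times [B] \bigr) = \langle \delta, \Pi^{pt}_*[C\Mm_{g,0}(M,J,A)] \rangle \, [B],
\end{equation*}
which vanishes unless $k = \dim \Pi^{pt}_*[C\Mm_{g,0}(M,J,A)] = 2 \langle c_1(M), A \rangle + (6-\dim M)(g-1)$. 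In the remaining case, the output is a scalar multiple of $[B] \in H_{\dim B}(B)$, whose Poincar\'e dual is a scalar in $H^0(B, \R)$. Hence the image of $CGW_{g,A}(E = B \times M)$ is entirely concentrated in degree $0$, and every Gromov-Witten characteristic class of positive degree vanishes.

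The main obstacle is purely technical and identical to the issue already handled in the proof of Lemma \ref{Lemma 3.1}: one must verify that the multi-section perturbations can be chosen coherently for the parametrized moduli space over $B \times [0,1]$ so that the Kuranishi cobordism truly restricts on $t=0,1$ to the perturbations used to define $\Pi_{i*}$. This is precisely the parametrized version of the argument in \cite{F-O1999}, \S 6, and once it is in place, the remainder of the argument is a direct K\"unneth/slant-product calculation.
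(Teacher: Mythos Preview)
Your proposal is correct and follows essentially the same approach as the paper: the invariance is reduced to the bordism argument behind Theorem \ref{Theorem 3.3}, and the vanishing for trivial bundles is obtained from the product decomposition $C\Mm_{g,0,s_A}(B\times M,J)=B\times C\Mm_{g,0}(M,J,A)$ together with a K\"unneth/slant-product computation showing the output is a multiple of $[B]$, hence lies in $H^0(B)$. The paper phrases the second step by pairing against an arbitrary cycle $[T]\subset B$ and observing $\langle PD([B]),[T]\rangle=0$ for $\dim[T]\ge 1$, but this is the same computation in dual form.
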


\begin{proof}
The first statement is obvious (cf. Theorem \ref{Theorem 3.3}).  
To prove 
the second statement we compute the Gromov-Witten characteristic classes
  on a trivial bundle $B \times M$. Let $[T]$ be a cycle in $B$. As
before we denote also by $p$ the projection $E^{(m)}\to B$.
%\begin{equation} 
\begin{align}
\langle CGW_{g,A}(E)(\delta), [T] \rangle 
&= \langle PD (p_*( \delta\backslash \Pi_*(B \times C\Mm_{g,0}(J,A))), ([T]) \rangle \nonumber \\
&= \langle PD ([B]), [T] \rangle \langle \delta, \Pi_*( C\Mm_{g,0}(J,A)) \rangle .  \nonumber 
\end{align}
%\end{equation}
Clearly $\langle PD ([B]), [T] \rangle =0$, if $\dim [T]\ge 1$.
\end{proof} 

The word ``characteristic class'' is explained by the following 
functoriality of these classes.  
In particular we see that the Gromov-Witten characteristic classes 
are cohomology classes of the classifying space ${\rm BSymp}_0 (M,\om)$.  
(Note that ${\rm BSymp}_0 (M,\om)$ is simply connected, hence any local systems on it 
are simple.  If we consider the moduli space of vertical stable maps with 
a fixed energy and a fixed Chern number, we can obtain characteristic classes for 
${\rm Symp} (M,\om )$-bundles.)  

\begin{theorem}
Let $E$ be a symplectic fiber bundle over $B$ with the simple local system 
$\Hh_2(E)$ and 
$f^*(E)$ be the induced symplectic fiber bundle 
by a map $f: B_1 \to B$.  
Then we find that 
$$
\widetilde{f}^*(CGW^E_{g,m,A}) = CGW^{f^*E}_{g,m,A}, 
$$
where $\widetilde{f}:f^*E \to E$ is the tautological bundle map of 
symplectic fiber bundles and 
$$CGW_{g,A}(f^*E) = f^* \circ (CGW_{g,A})(E) .$$
\end{theorem}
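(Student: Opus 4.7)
The plan is to exhibit a natural base-change isomorphism of moduli spaces with Kuranishi structures that intertwines the evaluation map, and then deduce both identities from compatibility of the slant product (resp.\ proper pushforward) with the bundle pullback $\widetilde{f}$ (resp.\ the base pullback $f$).

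First, I would choose a fiberwise compatible almost complex structure $J(E)$ on $E$ and pull it back via the tautological bundle map $\widetilde{f}:f^*E \to E$, which is a fiberwise diffeomorphism, to obtain a fiberwise compatible almost complex structure $J(f^*E):=\widetilde{f}^*J(E)$ on $f^*E$. Since $\widetilde{f}$ restricts to an identification $(f^*E)_{b_1} \xrightarrow{\sim} E_{f(b_1)}$ of symplectic fibers, a vertical $J(f^*E)$-stable map over $b_1 \in B_1$ is the same data as a vertical $J(E)$-stable map over $f(b_1) \in B$ together with a lift of the base point. This gives a canonical identification
$$
C\Mm_{g,m}(f^*E, J(f^*E), s_A) \;\cong\; B_1 \times_B C\Mm_{g,m}(E, J(E), s_A),
$$
and under this identification the evaluation map $\Pi$ for $f^*E$ factors through the corresponding evaluation map for $E$ via the obvious bundle maps $\widetilde{f}^{(m)}:(f^*E)^{(m)}\to E^{(m)}$ and $\mathrm{id}:C\Mm_{g,m}\to C\Mm_{g,m}$.

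The next step is to verify that this identification is an isomorphism of spaces with oriented Kuranishi structure: the Kuranishi charts of Lemma \ref{Lemma 3.1} are constructed from the fiberwise linearization $D_u\bar\partial_{J_b}$, which is preserved pointwise by the isomorphism $\widetilde{f}|_{(f^*E)_{b_1}}$, so a Kuranishi chart on the right-hand moduli space pulls back to a Kuranishi chart on the left-hand one with the same obstruction bundle. Consequently, a transverse multisection perturbation upstairs on $C\Mm_{g,m}(E, J(E), s_A)$ pulls back to a transverse multisection on $C\Mm_{g,m}(f^*E, J(f^*E), s_A)$, and base-change for the virtual fundamental class gives
$$
\Pi_*^{f^*E}\bigl[C\Mm_{g,m}(f^*E, J(f^*E), s_A)\bigr]
= (\widetilde{f}^{(m)})^!\,\Pi_*^E\bigl[C\Mm_{g,m}(E, J(E), s_A)\bigr]
$$
in the appropriate relative/transverse sense on $C\Mm_{g,m}\times E^{(m)}$.

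The first identity $\widetilde{f}^*(CGW^E_{g,m,A}) = CGW^{f^*E}_{g,m,A}$ then follows from the projection/naturality formula for the slant product: for $\gamma \in H^*(E^{(m)},\R)$ and $\delta\in H^*(C\Mm_{g,m},\R)$,
$$
\delta \backslash (\widetilde{f}^{(m)})^!\Pi_*^E[\cdot] \;=\; (\widetilde{f}^{(m)})^*\bigl(\delta\backslash \Pi_*^E[\cdot]\bigr),
$$
combined with the commutation of Poincaré duality with $\widetilde{f}^*$ on the cycle/cocycle level (the bundle map $\widetilde{f}^{(m)}$ is proper, with degree behaviour inherited from $f$). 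For the second identity, I specialise to $m=0$ and further push down by the projections $p:E\to B$, $p_1:f^*E\to B_1$; the base-change square $p_1\circ\widetilde{f}=f\circ p$ and proper base change give
$$
p_{1*}\bigl(\delta\backslash \Pi_*^{f^*E}[C\Mm_{g,0}(f^*E,\cdot,s_A)]\bigr)
= f^!\,p_*\bigl(\delta\backslash \Pi_*^E[C\Mm_{g,0}(E,\cdot,s_A)]\bigr),
$$
and applying Poincaré duality on $B$ and $B_1$ converts $f^!$ into $f^*$ on cohomology, yielding $CGW_{g,A}(f^*E)=f^*\circ CGW_{g,A}(E)$.

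The main obstacle is the middle step: checking that the Kuranishi structures really match under the base change, so that a single multisection perturbation on the $E$-side induces a transverse perturbation on the $f^*E$-side whose virtual fundamental cycle pushforward is literally the pullback of the one upstairs. Once this is in place, both claims become formal consequences of naturality of slant products and proper pushforward, so the bulk of the work is packaging the Kuranishi base-change statement cleanly (in particular, handling the orientations and the cases $2g+m\leq 1$ where $C\Mm_{g,m}$ is absent and one works with $\langle\cdot,(ev)_*[\cdot]\rangle$ directly).
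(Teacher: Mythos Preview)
Your approach is essentially the same as the paper's: both identify the pulled-back moduli space as the fiber product $C\Mm_{g,m,s_A}(f^*E) \cong C\Mm_{g,m,s_A}(E) \times_B B_1$ in the sense of Kuranishi structures, deduce from this the base-change identity $\widetilde{f}^*\bigl(PD(\Pi_*[C\Mm_{g,m,s_A}(E)])\bigr) = PD\bigl(\Pi_*[C\Mm_{g,m,s_A}(f^*E)]\bigr)$, and then read off both functoriality formulas. The paper is terser, essentially stating the fiber-product identification and invoking its compatibility with $\Pi$ without spelling out the slant-product and pushforward algebra that you expand.

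One technical caution on the step you yourself flag as the crux: your assertion that a transverse multisection on the $E$-side pulls back to a \emph{transverse} multisection on the $f^*E$-side is not correct in general. If $f$ is not a submersion (e.g.\ $B_1$ a point, or $f$ constant), the pulled-back section on $U_p \times_B B_1$ can lose transversality precisely in the base directions that $df$ collapses. The base-change formula for the virtual fundamental class nonetheless holds, but one must argue it differently: either further perturb on the $f^*E$-side and invoke cobordism invariance of the virtual class, or appeal to an abstract fiber-product compatibility statement for Kuranishi structures --- which is what the paper's one-line ``fiber product in the sense of spaces with Kuranishi structures'' is implicitly citing. The repair is routine; just replace the transversality-preservation claim with one of these standard workarounds.
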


\begin{proof}
By the construction, we have 
\begin{equation}
C\Mm_{g,m,s_A}(f^*(E)) = (C\Mm_{g,m,s_A}(E)) {}_p \times_f B_1, 
\nonumber \end{equation}
where we take the fiber product in the sense of spaces 
with Kuranishi structures and $p$ is the projection from 
$C\Mm_{g,m,s_A}(E))$ to $B$.  
Hence, we find that 
\begin{equation}
\widetilde{f}^* (PD (\Pi_*(C\Mm_{g,m,s_A}(E))) = 
PD(\Pi_*(C\Mm_{g,m,s_A}(f^*E))). 
\nonumber 
\end{equation}
Hence we can see immediately that
$$
(id. \otimes (\widetilde{f}^*)^{\otimes m})(CGW^E_{g,m,A}) 
= CGW^{f^*E}_{g,m,A}.
$$
When $m=0$, it implies that 
$$CGW_{g,A}(f^*E)= f^* \circ CGW_{g,A}(E)$$
\end{proof}

It is an easy exercise to interpret Theorem \ref{Theorem 4.3} and 
Theorem \ref{Theorem 4.4} 
in term of Gromov-Witten characteristic classes.

\section{Appendix. An alternative proof of  Theorem \ref{Theorem 4.3}}

Let $\om^{(1)}$, $\om^{(2)}$ be symplectic forms on $S^2$ such that 
$\int_{S^2} \om^{(1)} > \int_{S^2} \om^{(2)}$.  
According to Propostion 4.1 it suffices to find a symplectic 
bundle $E$ over $S^2$ with fiber 
$(S^2\times S^2, \om = \om^{(1)} \oplus \om^{(2)})$ and 
a parameterized  Gromov-Witten invariant whose value on $E$ is 
non-trivial.  We shall construct the bundle $E$ by finding its 
transition function $g$, i.e., a loop in 
${\rm Symp} (S^2\times S^2,\om)$.  
The existence of such element 
$g$ was shown by Gromov \cite{Gromov1985}, 
and in what follows we shall give 
a detailed proof.  First we need the following lemma
(compare with \cite{Gromov1985}, $2.4.C$).
Denote by $A_1$, resp. $A_2$ the homology classes 
$[S^2 \times \{ pt \}]$, resp. $[\{ pt \} \times S^2]$.  

\begin{lemma}\label{Lemma 5.1}
The subspace $\Jj_0$ of compatible almost complex 
structures $J$ on $S^2\times S^2$, 
for which there exists a $J$-holomorphic 
sphere in  a class $A_1 - \ell A_2$ for some $\ell \geq 1$, 
is a non-empty closed subset of codimension 2 
in $\Jj(S^2\times S^2, \om)$.  
For each $J\in (\Jj(S^2\times S^2,\om)\setminus \Jj_0)$ and 
for each point in $S^2\times S^2$ there is a unique $J$-holomorphic 
sphere representing class $A_i$ and passing through $x$.  
Moreover, these spheres are embedded.  
\end{lemma}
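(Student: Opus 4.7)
The plan is to prove nonemptiness of $\Jj_0$, its codimension in $\Jj$, closedness, and existence/uniqueness/embeddedness of $A_i$-spheres for $J \notin \Jj_0$, in that order. For nonemptiness, note that the Hirzebruch surface $F_2 = \mathbf{P}(\Oo \oplus \Oo(2))$ is diffeomorphic to $S^2 \times S^2$ and admits a K\"ahler form in every cohomology class $\alpha$ with $\alpha(A_1) > \alpha(A_2) > 0$, which includes $[\om]$ by hypothesis; Moser's theorem yields a compatible $J^{F_2} \in \Jj$ whose holomorphic $(-2)$-section represents $A_1 - A_2$, so $J^{F_2} \in \Jj_0$. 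For the codimension, for each $\ell \geq 1$ I would form the universal moduli space
\[
\widetilde{\Mm}_\ell = \{(J, [u]) : u \text{ a simple } J\text{-holomorphic sphere with } [u] = A_1 - \ell A_2\}.
\]
Standard Sard--Smale theory makes this a smooth Banach manifold; the Fredholm index $2\langle c_1(TM), A_1 - \ell A_2 \rangle + 4 = 8 - 4\ell$ of $D_u\bar\partial_J$, after quotienting by the six-dimensional reparametrization group, gives the projection $\widetilde{\Mm}_\ell \to \Jj$ virtual fiber dimension $2 - 4\ell$, so its image has codimension at least $4\ell - 2$. Primitivity of $A_1 - \ell A_2$ in $H_2$ excludes multiple covers, so every representing sphere is somewhere injective. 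The $\ell = 1$ stratum attains codimension $2$ exactly, while the higher strata lie in its closure (via the standard Hirzebruch degeneration $F_{2\ell} \rightsquigarrow F_2$), so $\Jj_0$ has codimension $2$.

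For closedness, suppose $J_n \to J$ in $\Jj$ and $J_n$-holomorphic spheres $u_n$ exist in classes $A_1 - \ell_n A_2$; the $\om$-energy bound $\om(A_1) - \ell_n\om(A_2) > 0$ makes $\ell_n$ bounded, so pass to a subsequence with $\ell_n = \ell$ constant. Gromov compactness yields a $J$-holomorphic stable-map limit of total class $A_1 - \ell A_2$; its irreducible components have classes $B_j = a_j A_1 + b_j A_2$ satisfying $a_j\om(A_1) + b_j\om(A_2) > 0$, $\sum a_j = 1$, and $\sum b_j = -\ell \leq -1$. A direct case analysis using $\om(A_1) > \om(A_2) > 0$ and the integrality of $(a_j, b_j)$ shows that exactly one component has $a_j = 1$ with $b_j \leq -1$, while the remaining components are fibers with $a_j = 0$, $b_j \geq 1$; the distinguished component then represents $A_1 - k A_2$ for some $k \geq 1$, so $J \in \Jj_0$.

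For existence, uniqueness, and embeddedness of $A_i$-spheres outside $\Jj_0$: the product complex structure $J_0$ foliates $S^2 \times S^2$ by $A_i$-spheres, so the evaluation map from the $A_i$-moduli space to $S^2 \times S^2$ has degree $+1$. For any $J \notin \Jj_0$, since $\Jj_0$ is codimension $2$ in the contractible $\Jj$, connect $J_0$ to $J$ by a path in $\Jj \setminus \Jj_0$; along this path the moduli space of $A_i$-spheres cannot bubble, since by the analysis of the closedness step any bubble would force some $J_t$ into $\Jj_0$ (the alternative class $A_2 - k A_1$ is ruled out by $\om^{(1)}(A_1) > \om^{(2)}(A_2)$). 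Hence the evaluation map remains a proper degree-$1$ map, giving existence; uniqueness and embeddedness follow from positivity of intersections combined with $A_1 \cdot A_1 = 0$, since two distinct $A_1$-spheres through a common point, or any self-intersection, would contribute a positive local intersection number. The main obstacle is the class analysis in the closedness step (and its reuse in the last step): ruling out exotic stable-map decompositions in class $A_1 - \ell A_2$ requires a careful interplay of $\om$-positivity, integrality, and the hypothesis $\om^{(1)}(A_1) > \om^{(2)}(A_2)$.
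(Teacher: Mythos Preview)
Your overall architecture is reasonable and close to the paper's, but the closedness argument has a genuine gap that then propagates to your existence step for $A_i$-spheres. You assert that a ``direct case analysis using $\om(A_1) > \om(A_2) > 0$ and the integrality of $(a_j, b_j)$'' forces one bubble component to lie in a class $A_1 - kA_2$ with $k \geq 1$ and the rest to be positive multiples of $A_2$. This is false as stated: for example, when $\tfrac{3}{2} < \om^{(1)}(A_1)/\om^{(2)}(A_2) < 2$, the decomposition
\[
A_1 - A_2 \;=\; (2A_1 - 3A_2) \;+\; (-A_1 + 2A_2)
\]
has both summands of positive $\om$-area, yet neither is of the form $A_1 - kA_2$ nor a multiple of $A_2$. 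Energy positivity and integrality alone do not exclude such configurations; the same issue arises for bubbling of $A_1$-spheres (e.g.\ $A_1 = (2A_1 - 3A_2) + (-A_1 + 3A_2)$ for suitable $\om$), so your final paragraph inherits the gap.

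What is missing is a four-dimensional input: either the adjunction formula or positivity of intersections. The paper first proves, via a virtual-genus computation, that an embedded $J$-holomorphic $A_2$-sphere exists through every point for \emph{every} compatible $J$ (not only for $J \notin \Jj_0$). With that in hand, positivity of intersections with the $A_2$-foliation forces each bubble component in class $a_jA_1 + b_jA_2$ to satisfy $a_j \geq 0$; combined with $\sum a_j = 1$ this yields exactly the decomposition you want. Equivalently, one can apply adjunction directly to each simple component: a simple $J$-sphere in class $aA_1 + bA_2$ must have non-negative virtual genus, i.e.\ $(a-1)(b-1) \geq 0$, and this immediately kills classes such as $2A_1 - 3A_2$ and $-A_1 + 2A_2$ (both primitive, hence not multiple covers). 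Once you insert either of these ingredients, your argument goes through; without them the ``direct case analysis'' is incomplete. A smaller point: for codimension \emph{exactly} $2$ you also need the projection $\widetilde{\Mm}_1 \to \Jj$ to be an immersion somewhere, which the paper obtains from the uniqueness of the $(A_1-A_2)$-sphere (kernel of the linearized projection is zero); your sentence ``attains codimension $2$ exactly'' skips this.
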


\begin{proof}
First of all, we note that for any $J \in \Jj(S^2\times S^2,\om)$ 
there exists a unique embedded $J$-holomorphic sphere 
representing the class $A_2$ and passing through each point $x$, 
cf. \cite{McDuff1991}.  
We include here the proof of this fact for the reader's convenience.  

For a generic compatible almost complex structure $J$, there exists 
such a $J$-holomorphic sphere.  
For $J_{\infty} \in \Jj(S^2\times S^2,\om)$, pick a sequence $\{J_i \}$ 
of generic compatible almost complex structures, which converges to 
$J_{\infty}$.  
Let $u_i$ be the $J_i$-holomorphic sphere representing the class $A_2$ 
and passing through $x$.  
Suppose that there exists a subsequence $u_{i_k}$ 
converging to a $J_{\infty}$-holomorphic sphere $u_{\infty}$.  
Since $A_2$ is a primitive class, the adjunction formula implies that 
$u_{\infty}$ is embedded.  Clearly it passes through $x$.  
Hence we obtain the desired existence.  
If it is not the case, a subsequence of $\{u_i \}$ converges 
to a $J_{\infty}$-stable map and there appears a $J_{\infty}$-holomorphic 
map $v$ representing the class $kA_2 - \ell A_1$ for some integers 
$k$ and $\ell$ such that $k$ is positive and $(k, \ell) \neq (1,0)$.  
If $v$ is multiply covered, factorize it as $v = p \circ v'$, where 
$v'$ is a simple map and $p$ is a ramified covering of $\C P^1$.  
Replace $v$ by $v'$, if necessary, we may assume that $v$ is simple.  
Since $\int_{A_1} \om > \int_{A_2} \om$ and 
$v$ is a $J_{\infty}$-holomorphic map with the symplectic area 
smaller than $\int_{A_2} \om$, we have 
$k \geq \ell + 1$ and $\ell \geq 1$.  
By the adjunction formula, the virtual genus of $C=v(\C P^1)$ is 
%\begin{equation}
\begin{align}
g_v(C) & = 1 + \frac{1}{2} (C \cdot C - c_1(C) ) \nonumber \\
       & = 1 - k \ell - k + \ell \nonumber \\
       & \leq  1 - (\ell + 1) \ell - (\ell + 1) + \ell \nonumber \\
       & < 0. \nonumber
\end{align}
%\end{equation}
However, the virtual genus $g_v(C)$ is a non-negative integer, which 
is a contradiction.  
Therefore we obtain the existence of $J_{\infty}$-holomorphic 
sphere representing the class $A_2$ and passing through the given point 
$x$.  

Next we prove that $\Jj_0$ is a non-empty closed subset of 
codimension 2.  
The subspace $\Jj_0$ is non-empty because the sphere 
$(x,-x)$ is symplectic.  
To prove the closedness of $\Jj_0$ we first notice 
that the energy of a holomorphic sphere 
in class $A_1 - \ell A_2$ is less than $\om^{(1)}(A_1)$. 
Thus we can apply the Gromov compactness argument 
to the following situation.  
Let a sequence of $J_i$-holomorphic spheres $u_i$ 
representing $A_1 - \ell A_2$.  
Suppose that $J_i$ converges to a compatible almost complex structure 
$J_{\infty}$.   
Then there is a subsequence $\{ u_{i_k} \}$, which converges to 
a $J_{\infty}$-stable map $u_{\infty}$.  
If $u_{\infty}$ is a $J_{\infty}$-holomorphic sphere, we find that 
$J_{\infty} \in \Jj_0$.  (In this case, by the adjunction formula, 
$u_{\infty}$ is an embedding.)  
Otherwise, $u_{\infty}$ consists of at least two irreducible components, 
which represent the classes 
$k_i A_1 - \ell_i A_2$  such that $\sum k_i= 1$
and $\sum \ell_i =\ell$.  
As we mentioned above, there is always a $J_{\infty}$-holomorphic sphere 
in class $A_2$.  
Taking into account of possivity of intesection 
in dimension 4, we conclude that these $J_{\infty}$-holomorphic spheres 
must be of type 
$A_1 - \ell_i A_2$ and $m_jA_2$ such that 
$\sum (-\ell_i) + \sum m_j =-\ell$.  
Since $m_j$, if exists,  must be positive, 
we conclude that there must be a bubble of
type $A_1 - \ell'A_2$, $\ell' \ge 1$, that proves the
closedness of $\Jj_0$. 

The codimension of $\Jj_0$ is at least 2 
by a similar argument as in 
\cite{LO1996}, \cite{LO2001}.  
(Namely for a fixed $\ell$ we consider the universal moduli 
space of the pairs $(J, J\mbox{\rm -holomorphic \  sphere \ in \ class}~ A_1-lA_2)$.  
The Fredholm index of the projection of this moduli space 
on the first factor is equal $4 + 2(1-\ell ) -6 \le -2$.)  
To prove that the codimension is precisely 2, we use the uniqueness 
of $J$-holomorphic sphere in class $A_1-A_2$ if it exists. (cf. with the 
argument in \cite{HLS}. It follows that
the kernel of the linearization of the projection from the universal 
moduli space to the first factor, i.e., the space of compatible 
almost complex structures equals zero.).

Finally we prove the existence of $J$-holomorphic sphere in class
$A_1$ for $J\in (\Jj\setminus \Jj_0)$ and use again the bubbling-off 
argument.  
For a generic compatible almost complex structure $J$, there exist 
$J$-holomorphic spheres in the class $A_1$, see \cite{McDuff1991}.  
Note that such $J$-holomorphic spheres are automatically embedded 
by the adjunction formula and that the class $A_1$ is primitive.  
If there is no $J_{0}$-holomorphic curve in the class $A_1$, 
we pick a sequence of generic compatible almost complex structures 
converging to $J_{0}$.  
Then the bubbling-off argument implies that 
there must be a $J_0$-holomorphic sphere 
in a class $A_1 -\ell  A_2$ for $\ell \geq 1$.  
\end{proof}

Now let us find an element $g\in {\rm Symp} (S^2\times S^2,\om^{(1)}\oplus \om^{(2)})$
by studying the action of the group of symplectomorphisms on
$\Jj\setminus \Jj_0$.  
Since $\Jj_0$ is a closed subset of codimension 2 we can choose a small 
disk $D$ in $\Jj(S^2\times S^2)$ such 
that this disk intersects $\Jj_0$ transversally 
at exactly one interiour point.
By results of Gromov \cite{Gromov1985} and McDuff \cite{McDuff1991}, 
for any compatible almost complex structure $J_{\theta}$,  
$\theta \in \partial D$, $S^2 \times S^2$ is foliated by 
$A_1$-curves and $A_2$-curves, respectively.  
In particular, $J_{\theta}$ is pointwisely positive on these $A_1$-curves 
and $A_2$-curves.  
It implies that there is a loop $g_t$ in the group 
${\rm Symp}  (S^2\times S^2, x)$ such that the image 
$g_t(J_0)$ is homotopic to the loop $\p D$ in $\Jj \setminus \Jj_0$, 
where $J_0$ is the complex structure on $\C P^1 \times \C P^1$.  
Thus, we can deform $D$ along the boundary so that 
$D$ intersects $\Jj_0$ transversally at one point and 
$\partial D = \{g_t (J_0) \}$.   

Now we construct our bundle $E$ by gluing two trivial
$S^2\times S^2$ bundles over another disk $D'$ using this loop $g_t$.  
Since the base space of $E$ is $S^2$, which is simply connected, 
$\Hh_2(E)$ is a simple local system.  
We claim that the parametrized Gromov-Witten invariant 
$I^E_{0,0,A_1-A_2}$ is 1. 
Since $c_1(A_1-A_2) =0$, the moduli space 
$C\Mm_{0,0}(E,J(E),A_1-A_2)$ is $0$-dimensional.  
To compute the invariant for our bundle $E$, we choose 
a generic fiberwise compatible
almost complex structur $J(E)$ on $E$ as follows.   
Note that it is the case of weakly monotone symplectic manifolds 
and we are working with Gromov-Witten invarians of genus 0, 
$c_1(A_1 -A_2) = 0$ and the dimension of the base of $E$ is $2$.  
Therefore it suffices to perturb $J$ to get the fundamental class
of the corresponding moduli space.  
We observe that the standard product complex structure $J_0$ on $S^2 \times S^2$ is 
 $(A_1-A_2)$-regular.  
Then we take
$J(E)$ being the gluing of the constant complex structure  $J_0$ 
over $D'$ and
the compatible almost complex structure parametrized by $D$ 
along the boundary $\p D$ by $g_t$.  
By the transversality of the intersection of $J(E)$ with $\Jj_0$, 
we find that
the vertical almost complex structure $J(E)$ is $(A_1-A_2)$-regular 
for the symplectic fiber bundle.
By the construction the parametrized moduli space
$C\Mm_{0,0, A_1-A_2}(E, J(E))$ consists of one point over the point 
$D \cap \Jj_0$.  
Therefore the value $I^E_{0,0,A_1-A_2} = 1$.
By Proposition 4.1, this nontrivial parametrized Gromov-Witten invariant
defines a non-trivial element in $Hom (\pi_2({\rm BSymp}  (M,\om), \Q)$.

{\rm H\^ong-V\^an L\^e},
Mathematical Institute, Zitna 25,
CZ-11567 Praha, Czech Republic \\
{\rm Kaoru Ono},
Department of Mathematics,
Hokkaido University,
Sapporo, 060-0810, Japan

\end{document}